\newcommand{\blind}{0}
\newtheorem{thm}{Theorem}[section]
\newtheorem{lem}{Lemma}[section]
\newtheorem{pro}{Proposition}[section]
\newtheorem{cor}{Corollary}[section]
\newtheorem{rem}{Remark}[section] 
\theoremstyle{definition}
\newtheorem{defn}{Definition}[section]
\DeclareMathOperator{\tr}{Tr}
\DeclareMathOperator{\E}{\mathbb{E}}
\DeclareMathOperator{\vc}{Vec}
\DeclareMathOperator{\cov}{Cov}
\DeclareMathOperator{\R}{\mathbb{R}}
\DeclareMathOperator{\N}{\mathbb{N}}
\DeclareMathOperator{\LL}{\mathcal {L}}
\DeclareMathOperator{\ra}{\rightarrow}
\begin{document}

\def\spacingset#1{\renewcommand{\baselinestretch}%
{#1}\small\normalsize} \spacingset{1}

%%%%%%%%%%%%%%%%%%%%%%%%%%%%%%%%%%%%%%%%%%%%%%%%%%%%%%%%%%%%%%%%%%%%%%%%%%%%%%

\if0\blind
{
  \title{\bf Fisher Information and Exponential Families \\
 Parametrized by a Segment of Means }
  \author{
   Piotr Graczyk \hspace{.2cm}\\
     LAREMA, Universit\'e d'Angers\\
     Salha Mamane  \\
   School of Statistics and Actuarial Science,  University of the Witwatersrand \\
   and  
     LAREMA, Universit\'e d'Angers
    }
\date{}

  \maketitle
%\newpage

\begin{abstract}
 We consider natural and general exponential families $(Q_m)_{m\in M}$ on $\mathbb{R}^d$ parametrized by the means.
 We study the submodels $(Q_{\theta m_1+(1-\theta)m_2})_{\theta\in[0,1]}$  parametrized by a segment in the means domain, mainly from the point of view of the Fisher information.
Such a parametrization allows for a parsimonious 
model and is particularly useful in practical situations when hesitating between two  parameters $m_1$ and $m_2$.
 The most interesting examples are obtained when $\mathbb{R}^d$ is a linear space of matrices, in particular for  Gaussian and Wishart models.
\end{abstract}

\footnote{
Supported by l'Agence Nationale de la Recherche
ANR-09-BLAN-0084-01.

AMS Subject Classification: 62H12, 62H10  

{\it Keywords:} Fisher information, efficient estimator, exponential family,   multivariate Gaussian distribution,
 Wishart distribution, 
 parsimony.}

\section{Introduction}

Fisher information is a key concept in  mathematical statistics. Its importance  stems from the Cram\'er-Rao inequality
which says that the variance of any unbiased estimator $T(X_1,\hdots X_n)$ of an unknown parameter $\theta$, is bounded by
the inverse of the Fisher information:
$\mathrm{Var}_{\theta}(T)-(I(\theta))^{-1}$ is 
semi-positive definite. Fisher information is therefore mainly used as a measure of how well a parameter can be a 
estimated.  This justifies the use of Fisher information in experimental design 
for predicting the maximum precision an experiment can provide on model parameters.
This also justifies the important role  Fisher information plays  in estimation theory where it provides  bounds for
 confidence regions and also in Bayesian analysis where it  provides a basis for noninformative priors.
 Fisher information can  be used to 
investigate the trade-off between parsimony of parameters and precision of the estimation of the parameters 
\citep{andersson2006ieee}.

Besides its importance in statistical theory, Fisher information has different interpretations that lead to some 
practical applications.  For example, the interpretation
of  Fisher information as a measure of the state of disorder of a dynamic system leads  to the use of Fisher information in stochastic optimal
control as a tuning tool to stabilise the performance of a dynamic system \citep{Ramirez}.
 Viewing  Fisher information as a measure of
information, leads to the statement of a ``minimum information principle'' akin to the well-known maximum entropy
principle for determining the ``maximally unpresumptive distribution'' satisfying some predefined constraints
\citep{Bercher}. \cite{gupta} describe the use of  Fisher information in model selection as a tool to discriminate between two models with
otherwise very similar fit to some data.
The use of Fisher information however goes far beyond statistics; \cite{frieden} shows that Fisher information
is in fact a key concept in the unification of science in general, as it allows a systematic approach to deriving 
Lagrangians.\\

The objective of this work is the study of the Fisher information for  exponential models $(P_m)_{m\in M}$ parametrized by a segment of  means $[m_1,m_2]$.
Exponential families of distributions have been extensively studied  \citep{brown,barndorff,letac_lectures, letac_casalis}.
 A parametrization of the family by a segment  instead of the whole means domain allows 
to obtain a  parsimonious model when the mean domain is high-dimensional. The parametrization of the mean parameter by a segment  is
particularly useful in practical situations when hesitating between two equally convenient mean values
 $m_1$ and $m_2$. 
 Such parametrization will also serve in sequential data collection, when an updated estimate of a
parameter largely differs from the previous estimate.
An important practical example is a Gaussian  model $N(u, \theta C +D)$ in  $\mathbb{R}^d$   with the mean vector $u$ known
 and the covariance matrix  in a segment $ IC+D$, where $\theta\in I=[a,b]\subset \R$.

 From the Fisher information point of view, exponential families constitute an interesting and important class of models. 
Their Fisher information  coincides with 
the second derivative of the cumulant generating function of the measure generating the family
and they are the only models for which the Cram\'er-Rao bound can
always be attained \citep{brown,letac_lectures}. \\

The paper is organised as follows. 
In Section \ref{general},  basic definitions and results 
on Fisher information and exponential families are recalled and 
extended to matrix-parametrized models.
 Section \ref{segment} contains
   new results on the  Fisher information of  exponential families parametrized by the domain of the means and the sub-families 
   parametrized by a segment of means $[m_1,m_2]$. 
In Section \ref{appli}, these  results are
 applied to   Gaussian and Wishart families of distributions.  When $m_1$ and $m_2$ are colinear, we construct efficient estimators for the segment parameter $\theta$.

\section{Preliminaries}\label{general}
%%%%%%%%%%%%%%%%%%%%%%%%%%%%%%%%%%%%before in Abstract
In most expositions of the theory of  exponential families and of the concept of Fisher information,
 the  parameter  is considered to be a vector whereas cases abound in multivariate analysis where 
the canonical parameter is   a matrix.  In this preliminary section we adapt the presentation
of the usual objects of exponential families (mean function, variance function and Fisher information) 
to the case where the canonical parameter is a matrix. 

We denote by $\mathbb{R}^{k\times m}$  the space of real matrices with $k$ rows and $m$ columns and by  $A\otimes B$
  the Kronecker product of two matrices.
 We use the usual notations  $A^T$ for the transpose matrix and  $\langle  A,B \rangle=\tr(A^TB)$ for the scalar product of two matrices.
 The operator $\vc$   converts a $k\times m$ matrix $A$ into a vector
 $\vc(A)\in \mathbb{R}^{k m}$
 by 
stacking the columns one underneath the other. The  $\vc$ operator is commonly used in applications of the matrix differential calculus in statistics, cf.
\citep{matdiff, Muir}. 

The following properties of the Kronecker product  are  used  in this work \citep[p.32,35]{matdiff}.
For non-singular squared matrices $A$, $B$ we have 
$(A\otimes B)^{-1}= A^{-1}\otimes B^{-1}$. 
For all matrices $A$, $B$ and $C$ such that the product $ABC$ is well defined
\begin{equation}\label{kron}
         \vc(A\,B\,C)= (C^T\otimes A) \vc(B).
\end{equation}
In this paper we use the following convention of the matrix differential calculus:
%rom ~\citep{matdiff}:
if a function $f:\mathbb{R}^{k\times p} \rightarrow  \mathbb{R}^{n\times m}$ is differentiable
then its derivative is a matrix $f'(x)\in \mathbb{R}^{nm\times kp}$ such that
 \begin{equation}\label{magnus}
\vc(df(x)(u)) =f'(x)\vc(u),\ \ \ u\in \mathbb{R}^{k\times p}.
 \end{equation}
The only  exception we will  make 
is the  derivative of a  function $K:\mathbb{R}^{k\times m}\rightarrow\mathbb{R}$,
 for which the following convention is  used:
 the derivative of $K$ is not a row vector but the matrix $K'(x)\in\mathbb{R}^{k\times m}$, 
related to the differential of $K$ by $dK(x)(u)=\langle K'(x), u\rangle = \tr(K'(x)^T u)$,
for all $u\in \mathbb{R}^{k\times m}$. 
This convention is needed to give sense to  formula (\ref{meanX}) for the mean of an exponential family.

In this section we consider  probability models  $\left(P_{s}(d\omega) \right)_{s \in S}$ ,
 $S \subset \mathbb{R}^{k\times m}$,  
on a measurable space $(\Omega, \mathcal{A})$ 
such that there exists a $\sigma$-finite positive  measure $\nu$ on $(\Omega, \mathcal{A})$ and a real
 function $(\omega, s)\mapsto l_{\omega}(s)$ such that 
\begin{equation*}
 P_{s}(d \omega)=e^{l_{\omega}(s)}\nu(d\omega)
\end{equation*}
and $\int e^{l_{\omega}(s)}\nu(d\omega)=1$.
These models  encompass but are not reduced to exponential families of distributions. We suppose that $S$ is open
and that the function $s\rightarrow l_{\omega}(s)$ is twice differentiable. We impose on $l_{\omega}(s)$
classical regularity conditions, allowing double   differentiation  under the integral sign in  $\int e^{l_{\omega}(s)}\nu(d\omega)$.

The score function $l^{\prime}_{\omega}(s)$ is the   derivative with respect 
to $s$ of the log-likelihood function $l_{\omega}(s)$. It is a $k\times m$ matrix with  zero mean.
The extension of the definition of Fisher information matrix from vector-parametrized models to 
 matrix-parametrized models is straightforward.
\begin{defn}\label{fisher}
 The Fisher information matrix of the model $\left(P_{s}(d\omega) \right)_{s \in S}$, 
$S\subset \mathbb{R}^{k\times m}$
  on a measurable space $(\Omega, \mathcal{A})$ is the $km\times km$ symmetric matrix 
\begin{eqnarray*}
 I(s)= \cov(l^{\prime}_{\omega}(s))
=\int_{\Omega} \vc(l^{\prime}_{\omega}(s)) \vc(l^{\prime}_{\omega}(s))^TP_{s}(d\omega).
\end{eqnarray*}
\end{defn}
Similarly as for vector parametrized models, the Fisher information can be written as the negative of the mean 
of the second derivative of the log-likelihood function.
\begin{pro}\label{lbis}
The Fisher information  of the model $\left(P_{s}(d\omega) \right)_{s \in S}$, 
$S\subset \mathbb{R}^{k\times m}$
equals
\begin{equation*}
 I(s)=-\int_{\Omega} l''_{\omega}(s)P_{s}(d\omega).
\end{equation*}
\end{pro}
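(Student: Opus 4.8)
The plan is to mimic the classical scalar argument, differentiating the normalization identity twice under the integral sign while keeping careful track of the matrix-calculus convention (\ref{magnus}) and the special convention for the derivative of a scalar-valued function. First I would start from $\int e^{l_\omega(s)}\nu(d\omega)=1$, which holds identically in $s$. Differentiating once in an arbitrary direction $u\in\R^{k\times m}$ and passing the derivative under the integral (legitimate by the assumed regularity conditions) gives $\int \langle l'_\omega(s),u\rangle\, P_s(d\omega)=0$; since this holds for every $u$, the score has zero mean, $\int l'_\omega(s)\,P_s(d\omega)=0$.

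Next I would differentiate this first-order identity a second time, in a direction $v\in\R^{k\times m}$. The product rule applied to $\langle l'_\omega(s),u\rangle\, e^{l_\omega(s)}$ produces two terms, yielding
\begin{equation*}
\int \langle dl'_\omega(s)(v),u\rangle\,P_s(d\omega) = -\int \langle l'_\omega(s),u\rangle\,\langle l'_\omega(s),v\rangle\,P_s(d\omega).
\end{equation*}
The remaining work is purely a translation into matrix form using the identity $\langle A,B\rangle=\tr(A^TB)=\vc(A)^T\vc(B)$. On the left, convention (\ref{magnus}) gives $\vc(dl'_\omega(s)(v))=l''_\omega(s)\vc(v)$, so the left-hand integrand equals $\vc(v)^T l''_\omega(s)^T\vc(u)$; on the right, each scalar factor reads $\langle l'_\omega(s),u\rangle=\vc(u)^T\vc(l'_\omega(s))$ and $\langle l'_\omega(s),v\rangle=\vc(l'_\omega(s))^T\vc(v)$, so that integrand is $\vc(u)^T\vc(l'_\omega(s))\vc(l'_\omega(s))^T\vc(v)$.

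Writing $M=\int l''_\omega(s)\,P_s(d\omega)$ and recalling Definition \ref{fisher}, the identity becomes $\vc(u)^T M\vc(v)=-\vc(u)^T I(s)\vc(v)$ for all $u,v$ (using that a scalar equals its transpose to turn $\vc(v)^T M^T\vc(u)$ into $\vc(u)^T M\vc(v)$). Since $u$ and $v$ are arbitrary, $M=-I(s)$, which is the claim. I expect the only delicate point to be the bookkeeping of the two differentiation conventions: the first derivative $l'_\omega(s)$ lives in $\R^{k\times m}$ via the scalar-function convention $dl_\omega(s)(u)=\langle l'_\omega(s),u\rangle$, whereas $l''_\omega(s)\in\R^{km\times km}$ is governed by (\ref{magnus}), so getting the transposes and the order of $\vc(u),\vc(v)$ right is exactly what makes the two bilinear forms match. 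The interchange of differentiation and integration is granted by the stated regularity assumptions, so no analytic subtlety arises there.
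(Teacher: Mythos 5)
Your proposal is correct and follows essentially the same route as the paper: both differentiate the normalization identity $\int e^{l_\omega(s)}\nu(d\omega)=1$ twice under the integral sign, obtain the zero-mean property of the score from the first differentiation, and extract $\int\bigl(\vc(l'_\omega(s))\vc(l'_\omega(s))^T+l''_\omega(s)\bigr)P_s(d\omega)=0$ from the second. The only difference is presentational — you spell out the directional derivatives and the $\vc$-bilinear-form bookkeeping explicitly, whereas the paper writes the same computation in compressed matrix-derivative notation.
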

\begin{proof}
 As a probability distribution, $ P_{s}(d\omega)$ satifies
\begin{eqnarray*}
 \int_{\Omega} P_{s}(d\omega)=1 &\implies& \frac{d}{ds}\int_{\Omega} P_{s}(d\omega)
=\int_{\Omega}\frac{d}{ds} P_{s}(d\omega)
= \int_{\Omega} e^{l_{\omega}(s)}l^{\prime}_{\omega}(s)\nu(d\omega)=0_{\mathbb{R}^{k\times m}}\\
&\implies& \frac{d}{ds}\int_{\Omega}e^{l_{\omega}(s)} l^{\prime}_{\omega}(s)\nu(d\omega)
= \int_{\Omega} \frac{d}{ds}\left(e^{l_{\omega}(s)} l^{\prime}_{\omega}(s)\right)\nu(d\omega)
=0_{\mathbb{R}^{k m\times k m}}\\
&\implies& \int_{\Omega} \left(e^{l_{\omega}(s)} \vc(l'_{\omega}(s))\vc(l^{\prime}_{\omega}(s))^T
+ e^{l_{\omega}(s)} l''_{\omega}(s) \right)\nu(d\omega)=0_{\mathbb{R}^{k m\times k m}}
\end{eqnarray*}
and the Proposition follows.
\end{proof}
Consider a $\sigma$-finite measure space $(\Omega, \mathcal{A},\nu)$, a finite-dimensional normed vector space $E$ and a measurable map
 $T:\Omega \rightarrow E$.
Let $\mu$  be the image  of the measure $\nu$ by $T$ and 
let $S_{\mu}$ be the interior of the  domain of the moment generating function of $\mu$, i.e. the set
$\{ s\in E:  \int_{E} e^{\langle s , x \rangle}d\mu(x) < \infty \}$.
 We suppose  that $\mu$ is a  $\sigma$-finite measure on $E$ and that $S_{\mu}\not=\emptyset$. 
The cumulant generating function of $\mu$  is defined as the logarithm of the moment generating function of $\mu$:
\begin{equation*}
k_{\mu}(s)=\log \int_{E} e^{\langle s , x \rangle}\mu(dx)=\log \int_{\Omega} e^{\langle s , T(\omega) \rangle}\nu(d\omega),\ \  s\in S_{\mu}.
\end{equation*}

\begin{defn}
% \begin{enumerate}
% \item 
The general exponential family generated by the measure $\nu$ and the map $T$ is the family of probability distributions
\begin{equation}\label{param0}
\{ P(s,T,\nu)(d\omega)= e^{\langle s \,,\, T(\omega)\rangle-k_{\mu}(s)}\nu(d\omega): \quad s\in S_{\mu}\}.
\end{equation}
% \item
The natural exponential family associated with the above general exponential family is the family
of probability distributions defined on the space $E$ by
\begin{equation}\label{param1}
 P(s,\mu)(dx)= e^{\langle s \,,\, x\rangle-k_{\mu}(s)}\mu(d x), \quad s\in S_{\mu}.
\end{equation}
% \end{enumerate}
\end{defn}
 Natural exponential families may be viewed as a special case of  general exponential families   with $\Omega=E$, $T(\omega)=\omega$ and $\nu=\mu$.
%%%%%%%%%%%%%%%%%%%%%%%%%%%%%%%%%%%%%%%%%%%%%%%%%%%%%%%%%%%%%%%%%%%%%%%%%%%%%%%%%%%%%%%%%%%%%%%%%%%%%%%%%%%%%%%%%%%%
The following result is well-known for  vector-valued and matrix-valued exponential families \citep{letac_casalis}. 
\begin{pro}
 \begin{enumerate}
  \item The set $S_{\mu}$ is convex. If $\mu$ is not concentrated on some affine hyperplane of $E$, then $k_{\mu}$ is a strictly convex function on $S_{\mu}$. 
\item The map $s \mapsto k_{\mu}^{\prime}(s)$ is an analytic diffeomorphism from $S_{\mu}$ to
its image $M=k_{\mu}^{\prime}( S_{\mu})\subset \mathbb{R}^{k\times m} $ called the domain of the  means of the family. In particular $M$ is open.
 \end{enumerate}
\end{pro}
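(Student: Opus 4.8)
The plan is to reduce everything to the classical finite-dimensional theory. Fix an orthonormal basis of $E$, so that $E$ is identified with $\mathbb{R}^N$, $N=\dim E$, and $\langle s,x\rangle$ becomes the ordinary Euclidean scalar product; in the matrix case $N=km$ and this identification is exactly the operator $\vc$. All three assertions then become statements about the Laplace transform $L(s)=\int_E e^{\langle s,x\rangle}\mu(dx)$ of a positive measure on $\mathbb{R}^N$, with $k_\mu=\log L$.

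For part (1), I would first derive convexity of the domain of $L$, hence of its interior $S_\mu$, from H\"older's inequality. If $s_0,s_1$ lie in the domain and $s_\theta=(1-\theta)s_0+\theta s_1$ with $\theta\in[0,1]$, then writing $e^{\langle s_\theta,x\rangle}=\bigl(e^{\langle s_0,x\rangle}\bigr)^{1-\theta}\bigl(e^{\langle s_1,x\rangle}\bigr)^{\theta}$ and applying H\"older with conjugate exponents $1/(1-\theta)$ and $1/\theta$ gives
\[
L(s_\theta) \le L(s_0)^{1-\theta}\, L(s_1)^{\theta} < \infty.
\]
Taking logarithms yields $k_\mu(s_\theta)\le (1-\theta)k_\mu(s_0)+\theta k_\mu(s_1)$, the convexity of $k_\mu$. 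The equality case of H\"older holds if and only if $e^{\langle s_0,x\rangle}$ and $e^{\langle s_1,x\rangle}$ are $\mu$-proportional, i.e. $\langle s_1-s_0,x\rangle$ is $\mu$-almost everywhere constant, which is precisely the statement that $\mu$ is carried by an affine hyperplane orthogonal to $s_1-s_0$. Hence, under the non-degeneracy hypothesis, the inequality is strict for $s_0\ne s_1$ and $\theta\in(0,1)$, giving strict convexity.

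For part (2), I would argue in four steps. First, analyticity: the regularity conditions imposed on the model permit differentiation under the integral sign to every order, and in fact $L$ extends holomorphically to a complex neighbourhood of $S_\mu$, so $k_\mu=\log L$ and its derivative $k_\mu'$ are real-analytic on $S_\mu$. Second, differentiating under the integral sign produces the familiar identities
\[
k_\mu'(s) = \E_{P(s,\mu)}[X], \qquad k_\mu''(s) = \cov_{P(s,\mu)}(X),
\]
so that $k_\mu''(s)$ is the Fisher information $I(s)$ of Definition \ref{fisher}; under non-degeneracy this covariance is symmetric positive definite, hence invertible. Third, injectivity: strict convexity of $k_\mu$ makes its gradient strictly monotone, $\langle k_\mu'(s_1)-k_\mu'(s_0),\,s_1-s_0\rangle>0$ for $s_0\ne s_1$, so $k_\mu'$ is injective. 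Fourth, the inverse function theorem applied with the invertible Jacobian $k_\mu''(s)$ makes $k_\mu'$ a local analytic diffeomorphism at every point; combined with global injectivity this gives an analytic diffeomorphism onto its image $M$, and since a local diffeomorphism is an open map, $M=k_\mu'(S_\mu)$ is open.

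The main obstacle I expect is the analyticity step: justifying cleanly the holomorphic extension of the Laplace transform and the interchange of differentiation with integration, rather than settling for mere $C^2$-differentiability. The remaining pieces---H\"older for convexity, positive-definiteness of the covariance for strict convexity, and the inverse function theorem for the diffeomorphism---are routine once the matrix parameter has been vectorized so that $\langle\cdot,\cdot\rangle$ is the ordinary scalar product on $\mathbb{R}^N$.
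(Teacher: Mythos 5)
Your proof is correct, but note that the paper does not actually prove this Proposition: it is stated as a well-known fact with a citation to Letac--Casalis, so there is no in-paper argument to compare against. Your write-up is the standard route and all the steps are sound: H\"older with exponents $1/(1-\theta)$ and $1/\theta$ gives log-convexity of the Laplace transform and hence convexity of the effective domain and of its interior $S_\mu$; the equality case of H\"older correctly reduces to $\langle s_1-s_0,x\rangle$ being $\mu$-a.e.\ constant, which is exactly concentration on an affine hyperplane, so non-degeneracy yields strict convexity. For part (2), strict convexity gives strict monotonicity of the gradient (add the two first-order strict-convexity inequalities at $s_0$ and $s_1$) and hence global injectivity, while positive definiteness of $k_\mu''(s)=\cov_s(X)$ plus the analytic inverse function theorem gives a local analytic diffeomorphism, and the two together give a global one with open image.

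Two small points worth making explicit. First, the analyticity step you flag as the main obstacle is handled cleanly by extending $L$ to the tube $\{z\in E\otimes\mathbb{C}:\ \mathrm{Re}\,z\in S_\mu\}$: the integrand satisfies $|e^{\langle z,x\rangle}|=e^{\langle \mathrm{Re}\,z,x\rangle}$, which is locally dominated by a fixed integrable function (take a finite convex combination of values of $e^{\langle\cdot,x\rangle}$ at nearby points of the domain), and holomorphy in each complex coordinate follows from Morera and Fubini; since $L>0$ on $S_\mu$, $k_\mu=\log L$ and $k_\mu'$ are real-analytic. Second, part (2) as stated silently requires the non-degeneracy hypothesis of part (1) --- without it $k_\mu''$ is singular and $k_\mu'$ fails to be injective --- and your proof correctly invokes it; it is the standing assumption under which the cited literature states the result.
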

%%%%%%%%%%%%%%%%%%%%%%%%%%%%%%%%%%%%%%%%%%%%%%%%%%%%%%%%%%%
The name "domain of the  means" for the set $M$ is justified by  formula (\ref{meanX}) of the following  Proposition, well-known in the vector case.
\begin{pro}
The mean and covariance of a random matrix $X$ following the distribution $P(s,\mu)$ belonging to the natural exponential family  generated by 
a measure $\mu$ are  given by
\begin{eqnarray}
 m(s) &=& \mathbb{E}_s(X) = k_{\mu}'(s) \label{meanX}\\
v(s)&=& \cov_s(\vc(X))= k_{\mu}''(s). \label{covX} 
\end{eqnarray}
\end{pro}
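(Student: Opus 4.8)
The plan is to differentiate the cumulant generating function $k_\mu$ twice and read off the mean and covariance, being careful throughout about the two differentiation conventions fixed in the preliminaries. Differentiation under the integral sign is legitimate on the open set $S_\mu$ because $k_\mu$ is analytic there, as recorded in the preceding Proposition.

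First I would establish (\ref{meanX}). Writing $L(s)=\int_E e^{\langle s,x\rangle}\mu(dx)$ so that $k_\mu(s)=\log L(s)$, the differential in a direction $u\in\R^{k\times m}$ is $dk_\mu(s)(u)=L(s)^{-1}\int_E \langle u,x\rangle\, e^{\langle s,x\rangle}\mu(dx)$. Since $e^{\langle s,x\rangle}/L(s)=e^{\langle s,x\rangle-k_\mu(s)}$ is exactly the density of $P(s,\mu)$ from (\ref{param1}), this equals $\E_s\langle u,X\rangle=\langle u,\E_s(X)\rangle$ by linearity. Comparing with the scalar-function convention $dk_\mu(s)(u)=\langle k_\mu'(s),u\rangle$ and using the symmetry of the scalar product yields $k_\mu'(s)=\E_s(X)=m(s)$.

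Next I would differentiate $m(s)=\int_E x\, e^{\langle s,x\rangle-k_\mu(s)}\mu(dx)$ once more. Applying the chain rule to the exponent and using the first derivative $dk_\mu(s)(u)=\langle u,m(s)\rangle$ just obtained gives $dm(s)(u)=\int_E x\,\langle u,x-m(s)\rangle\,P(s,\mu)(dx)$. Because $\E_s(X-m(s))=0$, I may recenter the outer factor to get $dm(s)(u)=\E_s\big[(X-m(s))\,\langle u,X-m(s)\rangle\big]$. The crucial bridge between the two conventions is the identity $\langle u,Y\rangle=\vc(u)^T\vc(Y)$; applying $\vc$ to the last expression and inserting this identity turns $dm(s)(u)$ into $\E_s\big[\vc(X-m(s))\vc(X-m(s))^T\big]\,\vc(u)=\cov_s(\vc(X))\,\vc(u)$. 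Comparing with the general convention (\ref{magnus}), which here reads $\vc(dm(s)(u))=m'(s)\vc(u)=k_\mu''(s)\vc(u)$, gives (\ref{covX}).

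The routine computations are the two differentiations under the integral; the only real care required is bookkeeping across the two distinct matrix-derivative conventions — the scalar-valued rule for $k_\mu$ (giving a $k\times m$ matrix through the scalar product) and the general $\vc$-rule for $k_\mu'$ (giving a $km\times km$ matrix). The identity $\langle u,Y\rangle=\vc(u)^T\vc(Y)$ is what reconciles them and converts the expectation of the bilinear expression into the covariance matrix of $\vc(X)$.
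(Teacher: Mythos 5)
Your proof is correct and follows essentially the same route as the paper: differentiate $k_\mu$ once under the integral to identify $k_\mu'(s)$ with $\mathbb{E}_s(X)$, then differentiate the resulting expression for $m(s)$ again, recenter using $\mathbb{E}_s(X-m(s))=0$, and pass through $\vc$ to recognize the covariance matrix. Your write-up is merely more explicit about the two differentiation conventions, which the paper handles implicitly.
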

\begin{proof}
Formula  (\ref{meanX}) follows from
\begin{eqnarray*}
 k_{\mu}'(s)=\frac{\int_{S_{\mu}}xe^{\langle s , x \rangle}\mu(dx)}{\int_{S_{\mu}}e^{\langle s , x \rangle}\mu(dx)}
=\int_{S_{\mu}}xe^{\langle s , x \rangle-k_{\mu}(s)}\mu(dx)
=\mathbb{E}_s(X).
\end{eqnarray*}
Next, using (\ref{meanX}), we obtain  
\begin{eqnarray*}
  k_{\mu}''(s)&=& \int_{S_{\mu}}\frac{d}{ds}\left( x e^{\langle s , x \rangle-k_{\mu}(s)}\right)\mu(dx)
= \int_{S_{\mu}} \vc(x) \vc( x-k'_{\mu}(s))^Te^{\langle s , x \rangle-k_{\mu}(s)}\mu(dx)\\
&=& \int_{S_{\mu}} \vc(x-m(s)) \vc( x-m(s))^Te^{\langle s , x \rangle-k_{\mu}(s)}\mu(dx)
= \cov_s(\vc(X)).
\end{eqnarray*}
\end{proof}
%%%%%%%%%%%%%%%%%%%%%%%%%%%%%%%%%%%%%%%%%%%%%%%%%%%%%%%%%%%%%%%%%%%%
\begin{rem}\label{momentsT}
If $W$ is a random matrix with a law $P(s,T,\nu)$ from the general exponential family,
then $T(W)=X$ in law and $m(s)$ and $v(s)$ are the mean and the covariance of $T(W)$.
\end{rem}
%%%%%%%%%%%%%%%%%%%%%%%%%%%%%%%%%%%%%%%%%%%%%%%%%%%%%%%%%%%%%%%%%%%%%

Now we compute the Fisher  information of the exponential families parametrized by the canonical parameter $s\in S_\mu$.
\begin{pro}\label{FisherP}
The Fisher  information for the parameter $s$ of  exponential families  (\ref{param0}) and  (\ref{param1})
is given by
\begin{equation}\label{Is}
 I(s)= k''_{\mu}(s)=v(s).
\end{equation}
\end{pro}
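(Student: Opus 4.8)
The plan is to reduce the claim to Proposition~\ref{lbis} and formula~(\ref{covX}) by computing the first two derivatives in $s$ of the log-likelihood of the exponential family. For the general family~(\ref{param0}) the log-likelihood is $l_\omega(s)=\langle s,T(\omega)\rangle-k_\mu(s)$, and for the natural family~(\ref{param1}) it is $l_x(s)=\langle s,x\rangle-k_\mu(s)$; since the two computations are verbatim the same I would treat the general case and note that the natural case follows by taking $\Omega=E$, $T(\omega)=\omega$, $\nu=\mu$.

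First I would compute the score. Differentiating $\langle s,T(\omega)\rangle$ in $s$, and differentiating $k_\mu$ with the scalar-valued convention $dK(x)(u)=\langle K'(x),u\rangle$ together with~(\ref{meanX}), I expect
\[
l'_\omega(s)=T(\omega)-k'_\mu(s)=T(\omega)-m(s)\in\mathbb{R}^{k\times m}.
\]
The key structural observation is that the $s$-dependence of the score is concentrated in the additive term $-k'_\mu(s)$, which is free of $\omega$. Hence, differentiating once more with respect to $s$ according to convention~(\ref{magnus}), the term $T(\omega)$ drops out and I obtain the $\omega$-independent Hessian
\[
l''_\omega(s)=-k''_\mu(s)\in\mathbb{R}^{km\times km}.
\]

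With this in hand the conclusion is immediate from Proposition~\ref{lbis}: since $l''_\omega(s)$ does not depend on $\omega$ and $P_s$ is a probability measure,
\[
I(s)=-\int_\Omega l''_\omega(s)\,P_s(d\omega)=k''_\mu(s)\int_\Omega P_s(d\omega)=k''_\mu(s),
\]
and the final equality $k''_\mu(s)=v(s)$ is precisely~(\ref{covX}). As an alternative route that bypasses Proposition~\ref{lbis}, I could instead insert the score $l'_\omega(s)=T(\omega)-m(s)$ directly into Definition~\ref{fisher}, recognize the resulting integral $\int_\Omega\vc(T(\omega)-m(s))\vc(T(\omega)-m(s))^T P_s(d\omega)$ as $\cov_s(\vc(T(W)))$, and conclude $I(s)=\cov_s(\vc(X))=v(s)$ through Remark~\ref{momentsT}.

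The argument carries no genuine analytic difficulty, since the regularity conditions permitting differentiation under the integral sign were already imposed. The one point demanding care, and the only place I expect to have to be precise, is the bookkeeping of the two distinct matrix-derivative conventions: $k_\mu$ is scalar-valued, so $k'_\mu(s)$ is a matrix in $\mathbb{R}^{k\times m}$ under the convention $dK(x)(u)=\langle K'(x),u\rangle$, whereas $s\mapsto k'_\mu(s)$ is matrix-valued, so its derivative $k''_\mu(s)$ follows convention~(\ref{magnus}) and lives in $\mathbb{R}^{km\times km}$. I would verify that these two conventions compose correctly, so that the second derivative of $l_\omega$ indeed equals $-k''_\mu(s)$ with the right $km\times km$ shape, matching the format of the covariance $v(s)$ appearing in~(\ref{covX}).
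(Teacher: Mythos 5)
Your proposal is correct and follows the same route as the paper: compute $l''_\omega(s)=-k''_\mu(s)$, observe it is independent of $\omega$, and invoke Proposition~\ref{lbis} together with~(\ref{covX}). The paper's proof is just a terser version of your main argument.
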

\begin{proof}
The log-likelihood is equal to
$l_{\omega}(s)={\langle s \,,\, T(\omega)\rangle}-k_{\mu}(s)$, so
 $l''_{\omega}(s)=-k''_{\mu}(s)$ does not depend on $\omega$.
Formula (\ref{Is}) follows by  Proposition \ref{lbis}.
\end{proof}

\begin{defn}
 Denote by $\psi: M \rightarrow S_{\mu}$, $m \mapsto \psi(m)= {k_{\mu}^{\prime}}^{-1}(m)$  the inverse of the
 diffeomorphism $k_\mu^{\prime}$. 
The general exponential family, parametrized by the domain of
the means $M$ is  given by the family of distributions
\begin{equation}\label{param3}
 Q(m,T,\nu)(d\omega)= e^{\langle \psi(m) \,,\, T(\omega)\rangle-k_{\mu}(\psi(m))}\nu(d\omega), \quad m\in M.
\end{equation}

The natural exponential family, parametrized by the domain of the means $M$ is the family
of probability distributions defined on the space $E$ by

\begin{equation}\label{param2}
  Q(m,\mu)(dx)= e^{\langle \psi(m) \,,\, x\rangle-k_{\mu}(\psi(m))}\mu(dx), \quad m\in M .
\end{equation}
\end{defn}
The mean of the families (\ref{param3}) and  (\ref{param2})  is equal to $m$. We denote the covariance
 of the families (\ref{param3}) and  (\ref{param2})  by $V(m)$ and we have by (\ref{covX})
 \begin{equation}\label{var_fn}
V(m)=v(\psi(m))=k''(\psi(m)).
 \end{equation}
The function $V: m \in M \rightarrow V(m)$ is called the {\it variance function} of the exponential family.\\

We will compute the Fisher information of the exponential families  (\ref{param3}) and  (\ref{param2}) parametrized by the mean
$m\in M$ in the next section.  We will need the following  formula  giving the Fisher information for a reparametrized model. 
\begin{thm}\label{c1}
 Consider a model $\left(P_{s}(d\omega) \right)_{s\in S}$ and  a reparametrization
$f: {\tilde S\subset \mathbb{R}^{k\times p} \rightarrow S \subset \mathbb{R}^{n\times m}}$, where $f$ is a differentiable map.
Let $I(s)$ be the information matrix of $\left(P_{s}(d\omega) \right)_{ s\in S}$.
The Fisher information matrix of the  model 
$\left(Q_t(d\omega) \right)_{t \in \tilde S}=\left(P_{f(t)}(d\omega) \right)_{t \in \tilde S}$ is 
\begin{equation}\label{reparametr}
\tilde I(t)=f'(t)^T I(f(t))f'(t).
 %J(t)= \begin{cases}
  %      \vc\left(f'(t)\right)^T I(f(t))\vc\left(f'(t)\right)\qquad \textnormal{if}\quad k=p=1\\
%  f'(t)^T I(f(t))f'(t)\qquad \textnormal{otherwise}.
   %    \end{cases}
\end{equation}
\end{thm}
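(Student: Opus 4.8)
The plan is to express the score function of the reparametrized model $(Q_t)_{t\in\tilde S}=(P_{f(t)})_{t\in\tilde S}$ in terms of the score of the original model via the chain rule, and then substitute into the definition of the Fisher information (Definition \ref{fisher}). Write $\tilde l_\omega(t)=l_\omega(f(t))$ for the log-likelihood of $Q_t$. Since $\tilde l_\omega$ is a real-valued function of $t\in\mathbb{R}^{k\times p}$, the main task is to identify its derivative $\tilde l'_\omega(t)\in\mathbb{R}^{k\times p}$, which by our convention for scalar-valued maps is determined by $d\tilde l_\omega(t)(w)=\langle \tilde l'_\omega(t),w\rangle$ for all $w\in\mathbb{R}^{k\times p}$.

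First I would differentiate along an arbitrary direction $w\in\mathbb{R}^{k\times p}$. By the chain rule, the scalar-product convention for the real-valued map $l_\omega$, and the identity $\langle A,B\rangle=\vc(A)^T\vc(B)$,
\[
d\tilde l_\omega(t)(w) = dl_\omega(f(t))\bigl(df(t)(w)\bigr) = \langle l'_\omega(f(t)), df(t)(w)\rangle = \vc(l'_\omega(f(t)))^T \vc(df(t)(w)).
\]
Then I would invoke the general differential convention (\ref{magnus}) applied to the matrix-valued map $f$, namely $\vc(df(t)(w))=f'(t)\vc(w)$, which turns the right-hand side into $\vc(l'_\omega(f(t)))^T f'(t)\,\vc(w)$. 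Comparing with the scalar-product convention written in vectorized form, $d\tilde l_\omega(t)(w)=\vc(\tilde l'_\omega(t))^T\vc(w)$, and using that the identity holds for every $w$, I would read off the key relation between the two scores,
\[
\vc(\tilde l'_\omega(t)) = f'(t)^T \vc(l'_\omega(f(t))).
\]

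Finally, substituting this relation into the definition $\tilde I(t)=\int_\Omega \vc(\tilde l'_\omega(t))\vc(\tilde l'_\omega(t))^T Q_t(d\omega)$ and observing that $f'(t)$ does not depend on $\omega$, so it factors out of the integral against $Q_t=P_{f(t)}$, yields
\[
\tilde I(t) = f'(t)^T \left(\int_\Omega \vc(l'_\omega(f(t))) \vc(l'_\omega(f(t)))^T P_{f(t)}(d\omega)\right) f'(t) = f'(t)^T I(f(t)) f'(t),
\]
which is precisely (\ref{reparametr}). The step I expect to require the most care is the bookkeeping of the two distinct matrix-differential conventions in play, the special scalar-product convention for the real-valued log-likelihoods $l_\omega,\tilde l_\omega$ versus the general convention (\ref{magnus}) for the matrix-valued reparametrization $f$, together with the transpose this forces when passing from the differential to $f'(t)$. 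Tracking the dimensions ($f'(t)\in\mathbb{R}^{nm\times kp}$, $\vc(l'_\omega)\in\mathbb{R}^{nm}$, $\vc(\tilde l'_\omega)\in\mathbb{R}^{kp}$) provides a useful consistency check and confirms that $\tilde I(t)$ is the expected $kp\times kp$ matrix.
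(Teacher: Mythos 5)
Your proof is correct and follows essentially the same route as the paper's: the same chain-rule computation combining the scalar-product convention for $l_\omega$ with the vectorization convention (\ref{magnus}) for $f$, the same identification $\vc(\tilde l'_\omega(t))=f'(t)^T\vc(l'_\omega(f(t)))$, and the same substitution into Definition \ref{fisher}. The dimension bookkeeping you add is a sensible consistency check but does not change the argument.
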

\begin{proof}
Let us denote $h_{\omega}(t)=l_{\omega}(f(t))$. We have  $Q_t(d\omega)=e^{h_{\omega}(t)}\nu(d\omega).$
 For all $t\in \tilde S$ and $u\in \mathbb{R}^{k\times p}$,
\begin{eqnarray*}
 dh_{\omega}(t)(u) &=& dl_{\omega}(f(t))(df(t)(u))=
\langle l'_{\omega}(f(t)) , df(t)(u) \rangle = \\
&=& \vc\left(l'_{\omega}(f(t)) \right)^T\vc\left( df(t)(u)\right) = \vc\left(l'_{\omega}(f(t)) \right)^T f'(t)\vc\left( u \right).
\end{eqnarray*}
Thus, using the convention introduced after (\ref{magnus}), 
$\vc\left(h'_{\omega}(t)\right)^T = \vc\left(l'_{\omega}(f(t)) \right)^T f'(t)$  and 
$$\vc\left(h'_{\omega}(t)\right) \vc\left(h'_{\omega}(t)\right)^T
=f'(t)^T\vc\left(l'_{\omega}(f(t)) \right)\vc\left(l'_{\omega}(f(t)) \right)^T f'(t).$$
Therefore, by Definition \ref{fisher} we get $\tilde I(t)=f'(t)^T I(f(t))f'(t)$. 
\end{proof}
%%%%%%%%%%%%%%%%%%%%%%%%%%%%%%%%%%%%%%%%%%%%%%%%%%%%%%%%%%%%%%%%%%%%%%%%%%%%%%%%%%%%%
\section{Fisher information of exponential families parametrized by the mean}\label{segment}
In this section we first compute the Fisher information of the exponential families  (\ref{param3}) and  (\ref{param2}) parametrized by the mean.
Next we consider the same problem for a submodel parametrized by a segment of means.
In order to avoid confusion, when the parameter of an exponential family is the mean $m$  we will denote the Fisher information
by $J(m)$. 
\begin{thm}\label{th1}
 The Fisher information of  the exponential families  (\ref{param3}) and  (\ref{param2}) parametrized by the mean
$m\in M$ equals 
\begin{equation}\label{info_m}
 J(m)= V(m)^{-1}=\psi'(m),
\end{equation}
where  $V(m)$ is  the variance function of the exponential family, given by (\ref{var_fn}).
\end{thm}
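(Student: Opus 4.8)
The plan is to regard the mean-parametrization as a reparametrization of the canonical family $(P_s)_{s\in S_\mu}$ through the analytic diffeomorphism $\psi=(k_\mu')^{-1}$, and then to invoke the chain-rule formula for Fisher information established in Theorem \ref{c1}. Since $Q(m,\mu)=P(\psi(m),\mu)$ (and likewise for the general family \eqref{param3}), and $\psi$ is differentiable, applying \eqref{reparametr} with $f=\psi$ gives
\[
J(m)=\psi'(m)^T\, I(\psi(m))\,\psi'(m).
\]
By Proposition \ref{FisherP} the canonical Fisher information is $I(s)=v(s)$, so by the definition \eqref{var_fn} of the variance function, $I(\psi(m))=v(\psi(m))=V(m)$. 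Hence $J(m)=\psi'(m)^T V(m)\,\psi'(m)$, and it remains only to simplify this product.

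The key step is to compute $\psi'(m)$. Because $\psi$ is the inverse of $k_\mu'$, the identity $k_\mu'(\psi(m))=m$ holds throughout $M$. I would differentiate it, using the matrix chain rule together with $(k_\mu')'=k_\mu''=v$, to obtain $v(\psi(m))\,\psi'(m)=\mathrm{Id}_{km}$, that is $V(m)\,\psi'(m)=\mathrm{Id}_{km}$. This already establishes the second asserted equality $\psi'(m)=V(m)^{-1}$ in \eqref{info_m}.

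Finally, I would use that $V(m)$, being a covariance matrix by \eqref{covX}, is symmetric and invertible; consequently $\psi'(m)=V(m)^{-1}$ is itself symmetric, so $\psi'(m)^T=V(m)^{-1}$. Substituting into $J(m)=\psi'(m)^T V(m)\,\psi'(m)$ collapses the product,
\[
J(m)=V(m)^{-1}V(m)V(m)^{-1}=V(m)^{-1},
\]
which yields both equalities in \eqref{info_m} simultaneously.

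The main obstacle I anticipate is not the algebra but the bookkeeping within the $\vc$-based matrix-differential conventions fixed after \eqref{magnus}. One must verify that the inverse-function relation $V(m)\,\psi'(m)=\mathrm{Id}_{km}$ genuinely holds as an identity of $km\times km$ matrices, i.e. that ``the differential of the inverse is the inverse of the differential'' is read off correctly through the $\vc$ operator and the stated derivative convention, and that the transpose step is licensed by the symmetry of $V(m)$. Once these conventions are confirmed, each step is immediate.
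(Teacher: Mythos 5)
Your proposal is correct and follows essentially the same route as the paper: apply Theorem \ref{c1} with $f=\psi$, identify $I(\psi(m))=V(m)$ via Proposition \ref{FisherP}, and use the inverse-function relation $\psi'(m)=[k''_{\mu}(\psi(m))]^{-1}=V(m)^{-1}$ to collapse $\psi'(m)^T V(m)\psi'(m)$ to $V(m)^{-1}$. The only difference is that you make explicit the symmetry of $V(m)$ needed for the transpose step, which the paper leaves implicit.
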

\begin{proof}
 We use Theorem \ref{c1} with $f=\psi: M\rightarrow S_\mu$. 
Since $\psi(m)={k'_{\mu}}^{-1}(m)$, we have $\psi'(m)=[k''_{\mu}(\psi(m))]^{-1}$.
Thus $J(m)=[k''_{\mu}(\psi(m))]^{-1}k''_{\mu}(\psi(m))[k''_{\mu}(\psi(m))]^{-1}=[k''_{\mu}(\psi(m))]^{-1}=V(m)^{-1}.$
\end{proof}
\begin{rem}
 Note a striking  contrast in the formulas (\ref{Is}) and (\ref{info_m}) for the Fisher information of an exponential family parametrized
 either by the canonical parameter $s\in S_\mu$ or by the mean $m\in M$; in the first case we have $I(\psi(m))=V(m)$, in the second
  $J(m)=V(m)^{-1}$.
\end{rem}

\subsection{Fisher information of exponential families parametrized by a \\segment of means}

 Consider a general exponential family $\{Q(m,T,\nu)(d\omega)\ : \ m\in M\}$ parametrized by the domain of the means $M$.
 Let $A\not=0,B\in \mathbb{R}^{k\times m} $ be two matrices. Define  $\Theta=\{\theta\in \mathbb{R}: \theta A+B\in M\}$. The set  $\Theta\subset \mathbb{R}$ is open because
 $M$ is open. 
 Suppose that $\Theta\not=\emptyset$. The  parametrization by a segment  of means $I\subset\Theta$      consists in considering the submodel 
 \begin{equation}\label{segm}
\{Q(\theta A +B,T,\nu): \theta\in  I\}.
 \end{equation}

%\begin{equation}
% R_{\theta(d\omega) = e^{\langle \psi(A\theta + B), t(\omega)\rangle - k_{\mu}(\psi(A\theta + B)) }\nu(d\omega).
%\end{equation}

In statistical practice, the following situation will be concerned by such models. Let $m_1\in M$ and $m_2\in M$ be
    two different estimations of the true mean $m$ of an exponential family (\ref{param3}) or (\ref{param2}) .
When one hesitates between them as estimators, and when $M$ is convex, it is natural to consider the model
$$
\{Q(\theta m_1 +(1-\theta)m_2 ,T,\nu) :  \theta \in [0,1]\}.
$$
Writing $\theta m_1 +(1-\theta)m_2=\theta(m_1-m_2) +m_2$ we see that this is a special case of the model (\ref{segm}).
%%%%%%%%%%%%%%%%%%%%%%%%%%%%%%%%%%%%%%
%%%%%%%%%%%%%%%%%%%%%%%%%%%%%%%%%%%%%%%%

The following theorem  gives the Fisher information of a general exponential
family parametrized by a segment of means. By analogy to the notation $J(m)$, we denote this information by $J(\theta)$. 

\begin{thm}\label{thx}
 The Fisher information of the model $\{Q(\theta A+B,T,\nu): \theta\in I\}$ equals
\begin{equation}\label{Jtheta}
J(\theta)=\vc(A)^T\, V(\theta A+B)^{-1}\,\vc(A).
\end{equation}
\end{thm}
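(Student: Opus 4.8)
The plan is to recognize the segment model as a one-dimensional reparametrization of the mean-parametrized family $(Q(m,T,\nu))_{m\in M}$ and to apply the chain rule for Fisher information (Theorem \ref{c1}), feeding in the fact that the mean-parametrized family has Fisher information $J(m)=V(m)^{-1}$ by Theorem \ref{th1}. Concretely, I would define the affine map $g:I\to M$, $g(\theta)=\theta A+B$, where $I\subset\mathbb{R}=\mathbb{R}^{1\times 1}$ is open and $A\neq 0$. Then the segment model $\{Q(\theta A+B,T,\nu):\theta\in I\}$ is exactly the composite family $(Q(g(\theta),T,\nu))_{\theta\in I}$, so Theorem \ref{c1} applies with the mean-parametrized family as base model and $f=g$ as reparametrization.

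The one computation that needs care is the derivative $g'(\theta)$ in the matrix-differential-calculus convention (\ref{magnus}). Since $g$ is affine, its differential is $dg(\theta)(u)=uA$ for the scalar increment $u\in\mathbb{R}$. Because $\vc$ of a scalar is itself, $\vc(u)=u$, and thus
\[
\vc\bigl(dg(\theta)(u)\bigr)=\vc(uA)=u\,\vc(A)=\vc(A)\,\vc(u).
\]
Comparing with the defining relation $\vc(dg(\theta)(u))=g'(\theta)\vc(u)$ from (\ref{magnus}), I read off that $g'(\theta)=\vc(A)\in\mathbb{R}^{km\times 1}$ is the column vector $\vc(A)$, independent of $\theta$.

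Substituting into the reparametrization formula (\ref{reparametr}) with $I(f(t))$ replaced by the mean-parametrized information $J(g(\theta))=V(g(\theta))^{-1}$ then gives
\[
J(\theta)=g'(\theta)^T\,J(g(\theta))\,g'(\theta)=\vc(A)^T\,V(\theta A+B)^{-1}\,\vc(A),
\]
which is precisely (\ref{Jtheta}). Note that $V(\theta A+B)^{-1}$ is well defined since $V$ is the variance function of a family whose generating measure is not concentrated on an affine hyperplane, so by the strict convexity in Proposition on $S_\mu$ the matrix $V(m)=k''_\mu(\psi(m))$ is positive definite.

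The only real obstacle is the dimensional bookkeeping in the derivative step: one must confirm that $g'(\theta)$ emerges as the \emph{column} vector $\vc(A)\in\mathbb{R}^{km\times 1}$ (so that $g'(\theta)^T J(g(\theta)) g'(\theta)$ is the scalar in (\ref{Jtheta})), which relies on applying the convention (\ref{magnus}) correctly with a scalar domain rather than the exceptional convention reserved for real-valued $K$. Everything past that is a direct substitution. As an alternative route one could compose directly from the canonical family using $\psi\circ g$ and (\ref{Is}), but passing through Theorem \ref{th1} is cleaner since it already packages the inversion $V(m)^{-1}$.
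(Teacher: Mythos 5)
Your proposal is correct and follows exactly the paper's own route: apply Theorem \ref{c1} to the affine reparametrization $f(\theta)=\theta A+B$ of the mean-parametrized family, using $J(m)=V(m)^{-1}$ from Theorem \ref{th1} and $f'(\theta)=\vc(A)$. The extra care you take with the convention (\ref{magnus}) to justify $f'(\theta)=\vc(A)$ as a column vector is a welcome elaboration of a step the paper states without comment.
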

\begin{rem}
 This and the following results are also true for submodels  $\{Q(\theta A+B,\mu): \theta\in I\}$ of  natural exponential families.                                                                                                                                                                                       
\end{rem}

\begin{proof}
By Theorem \ref{th1}, the Fisher information of the  model  $\{Q(m,T,\nu)\ : m\in M\}$ is
 $J(m)= V^{-1}(m)$.
  We apply Theorem \ref{c1} to  the reparametrization $f:I\ra M,  f(\theta)=\theta A +B$. 
We  have $f'(\theta)=\vc(A).$  
Then formula (\ref{reparametr})  gives (\ref{Jtheta}).
\end{proof}

The following Lemma is useful for the  derivation of an alternative formula for the Fisher information of an exponential family parametrized by a
segment of means and verifying an additional condition (\ref{eq20}). We will see in   Section \ref{appli} that this condition holds for Gaussian and Wishart models.
\begin{lem}\label{const}
 Assume that for all $m\in M$,
\begin{equation}\label{eq20}
         \langle m\,,\,\psi(m)\rangle= C,
        \end{equation}
for some constant $C\in \mathbb{R}$. Then, for all $u\in M$,
\begin{equation}\label{lemma}
  \langle m \,,\, d\psi(m)(u)\rangle  = -\langle u\,,\, \psi(m)\rangle.
\end{equation}
\end{lem}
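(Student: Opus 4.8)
The plan is to obtain (\ref{lemma}) simply by differentiating the constancy hypothesis (\ref{eq20}). I would introduce the scalar function $\phi:M\to\mathbb{R}$ defined by $\phi(m)=\langle m,\psi(m)\rangle$. By assumption $\phi\equiv C$ is constant on $M$, so its differential vanishes identically: $d\phi(m)(u)=0$ for every $m\in M$ and every direction $u$ (recall that $M$ is open in $\mathbb{R}^{k\times m}$, so any $u\in M$, and indeed any $u\in\mathbb{R}^{k\times m}$, is an admissible direction).

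The key step is to compute $d\phi(m)(u)$ by the product (Leibniz) rule. Since the scalar product $\langle A,B\rangle=\tr(A^TB)$ is a bilinear form, and since $\psi$ is differentiable (being the inverse of the analytic diffeomorphism $k_\mu'$), I would regard $\phi$ as the composition of this bilinear map with the pair of maps $m\mapsto m$ and $m\mapsto\psi(m)$. In the differential-calculus convention fixed after (\ref{magnus}), the identity map $m\mapsto m$ has $u$ as its directional derivative, while $m\mapsto\psi(m)$ has derivative $d\psi(m)(u)$. Bilinearity then yields
\[
d\phi(m)(u)=\langle u,\psi(m)\rangle+\langle m,d\psi(m)(u)\rangle .
\]

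Combining the two observations gives $\langle u,\psi(m)\rangle+\langle m,d\psi(m)(u)\rangle=0$ for all admissible $u$, which rearranges immediately into (\ref{lemma}). I do not anticipate any serious obstacle: the entire argument is a single application of the product rule to a function that happens to be constant. The only points deserving a word of care are the justification of the Leibniz rule for the matrix scalar product and the bookkeeping that the identity map contributes the term $\langle u,\psi(m)\rangle$ rather than $\langle \psi(m),u\rangle$; since $\langle\cdot,\cdot\rangle$ is symmetric this distinction is immaterial, and both points are routine.
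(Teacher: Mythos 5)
Your proposal is correct and follows exactly the same route as the paper's own proof: differentiate the constant function $m\mapsto\langle m,\psi(m)\rangle$ and apply the product rule to the bilinear scalar product. Nothing further is needed.
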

\begin{proof} By (\ref{eq20})   the differential of the function $g:M  \rightarrow \mathbb{R}$,
$m \mapsto \langle m\,,\, \psi(m)\rangle$ is  zero.
 Therefore, for all $m,u\in M$
\begin{eqnarray*}
  dg(m)(u)=  \langle m \,,\, d\psi(m)(u)\rangle + \langle u\,,\, \psi(m)\rangle =0\\
\end{eqnarray*}
and (\ref{lemma}) follows.
\end{proof}
%%%%%%%%%%%%%%%%%%%%%%%%%%%%%%%%%%%%%%%%%%%%%%%%%%%%%%%%%%%%
\begin{cor}\label{corx}
 Let  $\{Q(\theta A+B,T,\nu)(d\omega): \theta\in I\}$ be an exponential
 model parametrized by a segment of means.
 If  the condition (\ref{eq20}) holds then 
  the Fisher information of the  model   equals
 \begin{equation}
 J(\theta)=-\frac{d^2}{d\theta^2} \left[k_{\mu}(\psi(\theta A+B))\right].
 \end{equation}
\end{cor}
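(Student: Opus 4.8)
The plan is to differentiate the scalar function $F(\theta)=k_\mu(\psi(\theta A+B))$ twice and match the outcome against the expression for the Fisher information already supplied by Theorem \ref{thx}, namely $J(\theta)=\vc(A)^T\,V(\theta A+B)^{-1}\,\vc(A)$. Throughout I write $m=m(\theta)=\theta A+B$, so that the mean moves along the segment with constant velocity $\frac{dm}{d\theta}=A$.

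First I would compute $F'(\theta)$ using the matrix chain rule together with the scalar-derivative convention introduced right after (\ref{magnus}). Writing $s=\psi(m)$, the chain rule gives $F'(\theta)=\langle k_\mu'(s)\,,\,d\psi(m)(A)\rangle$. The decisive simplification is that $\psi$ inverts $k_\mu'$, so $k_\mu'(\psi(m))=m$; hence $F'(\theta)=\langle m\,,\,d\psi(m)(A)\rangle$.

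The crucial step is to invoke Lemma \ref{const}, which is available precisely because hypothesis (\ref{eq20}) is assumed. Taking $u=A$ in formula (\ref{lemma}) converts the first derivative into $F'(\theta)=-\langle A\,,\,\psi(\theta A+B)\rangle$. This is where the special structure of the model enters, and it is exactly what makes the second differentiation trivial: since $A$ is constant, only $\psi(\theta A+B)$ depends on $\theta$, whence $F''(\theta)=-\langle A\,,\,d\psi(m)(A)\rangle=-\vc(A)^T\,\psi'(m)\,\vc(A)$. Finally I would apply Theorem \ref{th1}, which yields $\psi'(m)=V(m)^{-1}$, to rewrite $-F''(\theta)=\vc(A)^T\,V(\theta A+B)^{-1}\,\vc(A)$, and this is precisely $J(\theta)$ by Theorem \ref{thx}.

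I expect the first differentiation to be the main obstacle, since one must correctly combine the matrix chain rule with the scalar-derivative convention and recognize the identity $k_\mu'(\psi(m))=m$. The role of condition (\ref{eq20}) is genuinely essential rather than cosmetic: without Lemma \ref{const}, differentiating $\langle m\,,\,d\psi(m)(A)\rangle$ by the product rule leaves a second-order differential term $\langle m\,,\,d^2\psi(m)(A,A)\rangle$ that has no reason to vanish. Condition (\ref{eq20}) is exactly what forces this term into the combination $\langle m\,,\,d^2\psi(m)(A,A)\rangle=-2\langle A\,,\,d\psi(m)(A)\rangle$, so that the two contributions collapse to the single clean expression for $J(\theta)$.
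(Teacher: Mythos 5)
Your proof is correct and follows essentially the same route as the paper's: chain rule plus the identity $k_\mu'(\psi(m))=m$ to get $F'(\theta)=\langle m\,,\,d\psi(m)(A)\rangle$, then Lemma \ref{const} with $u=A$ to reduce this to $-\langle A\,,\,\psi(\theta A+B)\rangle$, a second differentiation giving $-\vc(A)^T\psi'(\theta A+B)\vc(A)$, and finally Theorems \ref{th1} and \ref{thx} to identify this with $-J(\theta)$. Your closing remark on how (\ref{eq20}) controls the second-order term $\langle m\,,\,d^2\psi(m)(A,A)\rangle$ is accurate but not needed for the argument as written.
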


\begin{proof}
Let $h(\theta)= k_{\mu}(\psi(\theta A +B))$ and $f(\theta)=\theta A +B$. 
We want to compute $h''(\theta)$.
 If $\theta, u \in \mathbb{R}$, 
\begin{eqnarray*}
dh(\theta)(u) &=& dk_{\mu}(\psi(f(\theta)))\big(d\psi(f(\theta))(df(\theta)(u))\big)\\
&=& \langle k'_{\mu}(\psi(f(\theta)))\,,\,d\psi(f(\theta))(df(\theta)(u))  \rangle\\
&=& \langle f(\theta)\,,\,d\psi(f(\theta))(df(\theta)(u)) \rangle\\
&=& -\langle df(\theta)(u) \,,\,\psi(f(\theta))  \rangle\\ 
&=& -u\langle  A\,,\,\psi(f(\theta))  \rangle, 
\end{eqnarray*}
where we used successively: the convention on $k'_{\mu}$ introduced after (\ref{magnus}), the equality $k'_{\mu}\circ \psi(m)=m$,
Lemma \ref{const} and the formula $ df(\theta)(u)=uA$. Thus we have  
$h'(\theta)=
 -\langle  A\,,\,\psi(f(\theta))  \rangle.$ 
 Now, starting as in the computation of $h'(\theta)$ and using (\ref{magnus}), we get
 $$h''(\theta)=   -\langle  A\,,\,d\psi(f(\theta))(A)  \rangle=-\vc(A)^T\vc(d\psi(f(\theta))(A))=
  -\vc(A)^T \psi'(\theta A + B) \vc(A).$$
We conclude using (\ref{info_m}) and
  Theorem \ref{thx}. 
\end{proof}

\section{Applications}\label{appli}

In this section, we apply the  results from  preceding sections 
to the study of  some important exponential families parametrized by a segment of means.

We denote by $\mathcal{S}_d$ the vector  space of $d \times d$ symmetric matrices and by  $\mathcal{S}_d^+$ 
the open cone of positive definite matrices.
\subsection{Exponential families of Gaussian distributions}
Let us recall the construction of the multivariate Gaussian model $\{N(u, \Sigma);\, \Sigma \in \mathcal{S}_d^+ \}$ as a general exponential family.
 We consider $\Omega=\mathbb{R}^d$ equiped with a normalised Lebesgue measure  
  $\nu(d\omega)=d\omega/(2\pi)^{d/2}$, the space 
 $E=\mathcal{S}_d $  and the map
  $$ T: \mathbb{R}^d  \rightarrow \mathcal{S}_d,\ \ T(\omega)=-\frac12 {(\omega-u)(\omega-u)^T}.$$
  The image measure
 $\mu$ on $E$  is concentrated on   the opposite of the cone of  semi-positive definite matrices of 
rank one.  
 For  $s\in \mathcal{S}_d$, the moment generating function of $\mu$ equals  
$$
\int_{\Omega}e^{\langle s,T(\omega)\rangle}\nu(d\omega)=\frac{1}{(2\pi)^{d/2}}\int_{\mathbb{R}^d}
e^{ -\frac12\tr\left(s {(\omega-u)(\omega-u)^T} \right)}d\omega
=(\det s)^{-1/2}
$$
when  $s\in\mathcal{S}_d^+ $  and it is infinite  otherwise. Thus   $S_{\mu}=\mathcal{S }_d^+ $
and the cumulant function is
  $$k_{\mu}(s)=-\frac{1}{2}\log \det(s),\ \ \ s\in S_{\mu}=\mathcal{S}_d^+. $$
  The general exponential family is therefore 
\begin{eqnarray} \label{eqt1}
P(s,T,\nu)(d\omega)=
\frac{1}{(2\pi)^{d/2}}e^{\langle s \,, \,-\frac{1}{2}(\omega-u) (\omega-u)^T\rangle+\frac{1}{2}\log \det(s)}d\omega
=\frac{(\det s)^{1/2}}{(2\pi)^{d/2} } e^{-\frac{1}{2}(\omega-u)^T s (\omega-u)}d\omega,
\end{eqnarray}
which is the family of Gaussian distributions $N(u, s^{-1})$ on $\R^d$ with a  fixed mean $u\in \R^d$, parametrized by $s=\Sigma^{-1}$,  the inverse of the covariance matrix $\Sigma$ supposed to be invertible.
%%%%%%%%%%%%%%%%%%%%%%%%%%%%%%%%%%%%%%%%%%%%%%%%%%%%%%%%%%%%%%%%%%%%%%%%%%

The derivative of the function $X\in \R^{d\times d}\ra \det X$ is the cofactor matrix $X^\sharp$ which equals $(\det X)(X^{-1})^T$ when
$X$ is inversible. It follows that 
 $$m(s)=k_{\mu}'(s)=-\frac{1}{2}s^{-1}, \ \ \  s\in \mathcal{S}_d^+.$$
 This can be also  deduced from Remark \ref{momentsT};   if $W$ is a random vector with law $N(u, s^{-1})$,
then   $$m(s)=k_{\mu}'(s)= \E T(W)=\E(-\frac12 {( W-u)(W-u)^T}) =-\frac12 \cov W\in -\mathcal{S}_d^+.$$
%%%%%%%%%%%%%%%%%%%%%%%%%%%%%%%%%%%%%%%%%%%%%%%%%%%%%%%%%%%%%%%%%%%
 The means domain   is $M=-\mathcal{S}_d^+ $  and the inverse mean map is
$\psi(m)=-\frac{1}{2}m^{-1}$.
 The  Gaussian general exponential family parametrized by $m$ in the  means domain $M=-\mathcal{S}_d^+$ is therefore the  family  
 \begin{equation}\label{GaussMean}
  Q(m, T,\nu)=N(u, -2m).
 \end{equation}
 Up to a trivial affine change of parameter $\Sigma=-2m$, this parametrization by the covariance parameter  is more natural  than
 the  parametrization of the family  $(N(u, s^{-1}))_{s\in \mathcal{S}_d^+}$   by the canonical parameter $s$.
 
In order to compute the variance function, recall that $XX^{-1}=I_d$ implies that $dX^{-1}=-X^{-1}dX\,X^{-1}$
and $(X^{-1})'=-X^{-1}\otimes \,X^{-1}$. Thus $k''_\mu(s)=\frac{1}{2}s^{-1} \otimes s^{-1}$ and  formula (\ref{var_fn}) implies that
\begin{equation}\label{varianceGauss}
  V(m) =2m\otimes m.
\end{equation}

By Proposition \ref{FisherP}, 
  the Fisher information of the family  $(N(u, s^{-1}))_{s\in \mathcal{S}_d^+}$  is
 $I(s)= \frac{1}{2} s^{-1}\otimes s^{-1}.$
 
 By Theorem \ref{th1} and  formula (\ref{varianceGauss}), 
  the Fisher information of the model $(N(u, -2m))_{m\in - \mathcal{S}_d^+}$ equals
  $J(m)= \frac{1}{2}m^{-1}\otimes m^{-1}$.
\begin{cor}\label{sigma}
 The Fisher information matrix of the Gaussian model $(N(u, \Sigma))_{\Sigma \in \mathcal{S}_d^+}$
is  
$$J(\Sigma)= \frac{1}{2}\Sigma^{-1}\otimes \Sigma^{-1}.$$
\end{cor}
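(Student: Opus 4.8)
The plan is to view the passage from the mean parametrization to the covariance parametrization as a simple linear reparametrization and to invoke the reparametrization formula of Theorem \ref{c1}. The preceding paragraph has already recorded that, for the model $(N(u,-2m))_{m\in-\mathcal{S}_d^+}$ parametrized by the mean $m$, the Fisher information is $J(m)=\frac{1}{2}m^{-1}\otimes m^{-1}$. Since $N(u,\Sigma)$ corresponds to $\Sigma=-2m$, equivalently $m=-\frac{1}{2}\Sigma$, I would set $f:\mathcal{S}_d^+\to-\mathcal{S}_d^+$, $f(\Sigma)=-\frac{1}{2}\Sigma$, so that the model $(N(u,\Sigma))_{\Sigma\in\mathcal{S}_d^+}$ is exactly $(N(u,-2m))_{m=f(\Sigma)}$, and the corollary is the Fisher information of this reparametrized model.

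First I would compute the derivative of $f$ in the convention (\ref{magnus}). Because $f$ is linear, $df(\Sigma)(u)=-\frac{1}{2}u$, hence $\vc(df(\Sigma)(u))=-\frac{1}{2}\vc(u)$ and $f'(\Sigma)=-\frac{1}{2}I_{d^2}$. Substituting into (\ref{reparametr}) gives $J(\Sigma)=f'(\Sigma)^T J(f(\Sigma))f'(\Sigma)=\frac{1}{4}\,J\!\left(-\frac{1}{2}\Sigma\right)$, since the scalar factor $-\frac{1}{2}$ appears twice and squares to $\frac{1}{4}$.

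The only remaining step is to evaluate $J(-\frac{1}{2}\Sigma)$, which amounts to careful bookkeeping of the scalar constants through the inverse and the Kronecker product. Using $(-\frac{1}{2}\Sigma)^{-1}=-2\Sigma^{-1}$ together with the bilinearity $(cA)\otimes(cB)=c^2(A\otimes B)$, I would obtain $J(-\frac{1}{2}\Sigma)=\frac{1}{2}(-2\Sigma^{-1})\otimes(-2\Sigma^{-1})=2\,\Sigma^{-1}\otimes\Sigma^{-1}$. Combining this with the prefactor $\frac{1}{4}$ yields $J(\Sigma)=\frac{1}{2}\Sigma^{-1}\otimes\Sigma^{-1}$, as claimed.

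I expect no genuine obstacle here; the argument is entirely routine once the reparametrization $\Sigma\mapsto-\frac{1}{2}\Sigma$ is identified. The one place demanding care is tracking the numerical constants — the $\frac{1}{4}$ coming from $f'(\Sigma)$, the $\frac{1}{2}$ inherent in $J(m)$, and the factor $4$ produced by scaling both Kronecker factors by $-2$ — so that they combine to exactly $\frac{1}{2}$.
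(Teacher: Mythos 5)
Your proposal is correct and follows exactly the paper's own route: the paper's one-line proof also applies Theorem \ref{c1} to the reparametrization $\Sigma=-2m$ of the mean-parametrized model $(N(u,-2m))_{m\in-\mathcal{S}_d^+}$; you have merely spelled out the constant bookkeeping that the paper leaves implicit. All the scalar factors combine correctly to give $\frac{1}{2}\Sigma^{-1}\otimes\Sigma^{-1}$.
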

\begin{proof}
 Using Theorem \ref{c1} and a reparametrization $\Sigma=-2m$  we obtain $\tilde J(\Sigma)= \frac{1}{2}\Sigma^{-1}\otimes \Sigma^{-1}= J(\Sigma)$.
\end{proof}

Let us now consider 
Gaussian models parametrized  by a segment of covariances.
\begin{cor}
Let $C$ and $D$ be two symmetric matrices and let $I\subset \R$ be a non-empty segment 
such that 
 $I\subset  \Theta = \{\theta\in \mathbb{R}: \theta C+D\in \mathcal{S}_d^+\}$.
 The Fisher information   of the Gaussian model $\{N(u, \theta  C  + D),  \theta \in I\}$
is 
\begin{eqnarray*}
 J(\theta)&=& \frac{1}{2}\tr\left(C(\theta C +D)^{-1} C(\theta C +D)^{-1}\right).
\end{eqnarray*}
\end{cor}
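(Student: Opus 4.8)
The plan is to recognize that the covariance segment $\theta C + D$ is simply an affine reparametrization of the full covariance parameter $\Sigma$ of the Gaussian model, and that the Fisher information for the latter has just been computed in Corollary \ref{sigma} as $J(\Sigma) = \frac{1}{2}\Sigma^{-1}\otimes\Sigma^{-1}$. So the natural route is to apply the reparametrization formula of Theorem \ref{c1}. Concretely, I would introduce the map $f:I\to\mathcal{S}_d^+$, $f(\theta)=\theta C + D$, so that the model $\{N(u,\theta C + D):\theta\in I\}$ is exactly the reparametrization $\Sigma=f(\theta)$ of $(N(u,\Sigma))_{\Sigma\in\mathcal{S}_d^+}$. (Equivalently one could pass through the means parametrization via $m=-\frac12\Sigma$ and apply Theorem \ref{thx} with $A=-\frac12 C$, $B=-\frac12 D$ and the variance function \eqref{varianceGauss}, but starting from Corollary \ref{sigma} is the shortest path.)

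First I would compute the derivative $f'(\theta)$ in the sense of convention \eqref{magnus}. Since $f$ is affine with $df(\theta)(u)=uC$ for a scalar increment $u\in\mathbb{R}$, and $\vc(df(\theta)(u))=u\,\vc(C)$, the derivative is the constant column vector $f'(\theta)=\vc(C)\in\mathbb{R}^{d^2}$ (this is precisely the specialization of the step $f'(\theta)=\vc(A)$ used in the proof of Theorem \ref{thx}). Theorem \ref{c1} then yields
\[
J(\theta)=f'(\theta)^{T}\,J(f(\theta))\,f'(\theta)=\tfrac12\,\vc(C)^{T}\big[(\theta C+D)^{-1}\otimes(\theta C+D)^{-1}\big]\vc(C).
\]

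The only genuine computation left is to rewrite this Kronecker quadratic form as a trace. Writing $\Sigma=\theta C+D$ and applying the identity \eqref{kron}, I would use $(\Sigma^{-1}\otimes\Sigma^{-1})\vc(C)=\vc(\Sigma^{-1}C\,\Sigma^{-1})$, then $\vc(P)^{T}\vc(Q)=\langle P,Q\rangle=\tr(P^{T}Q)$ together with the symmetry of $C$, to obtain
\[
J(\theta)=\tfrac12\,\tr\big(C(\theta C+D)^{-1}C(\theta C+D)^{-1}\big),
\]
which is the claimed formula. I anticipate no real obstacle here; the single point requiring care is the correct placement of transposes when invoking \eqref{kron}, and this is harmless because $\Sigma=\theta C+D$ (hence its inverse) and $C$ are all symmetric, so every transpose that appears can be dropped.
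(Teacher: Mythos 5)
Your proposal is correct and follows exactly the paper's own route: Corollary \ref{sigma} plus the reparametrization formula of Theorem \ref{c1} with $f(\theta)=\theta C+D$, $f'(\theta)=\vc(C)$, and then the Kronecker identity \eqref{kron} together with symmetry to convert the quadratic form into the trace. No gaps.
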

\begin{proof}
We use Corollary \ref{sigma} and Theorem \ref{c1} with $f(\theta)=\theta C +D$. It follows that
$$
 J(\theta)=  \vc(C)^T J(\theta C +D) \vc(C)=
 \frac{1}{2} \vc(C)^T \left((\theta C +D)^{-1} \otimes (\theta C +D)^{-1}  \right)\vc(C).
 $$
 Applying (\ref{kron}) we get 
 $$
  J(\theta)= \frac{1}{2} \vc(C)^T \vc\left((\theta C+D)^{-1} C (\theta C+D)^{-1} \right) 
= \frac{1}{2}\tr\left(C(\theta C+D)^{-1} C(\theta C +D)^{-1}\right).
 $$
\end{proof}
On the other hand  we have 
the following  alternative formula for the information $J(\theta)$. 
\begin{cor}
 The Fisher information   of the Gaussian model $\{N(u, \theta  C  + D),  \theta \in I\}$
is 
\begin{eqnarray}\label{infoGaussBis}
 J(\theta)= -\frac12 \frac{d^2}{d\theta^2} (\log\det(\theta C+D)).
\end{eqnarray}
\end{cor}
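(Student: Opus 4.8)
The plan is to invoke Corollary \ref{corx}, which was set up precisely for this situation, rather than to differentiate the trace formula of the preceding corollary directly. The first step is to recast the Gaussian model $\{N(u,\theta C+D):\theta\in I\}$ in the mean parametrization. Since the Gaussian family parametrized by the means domain is $Q(m,T,\nu)=N(u,-2m)$, the covariance segment $\Sigma=\theta C+D$ corresponds to $m=-\tfrac12(\theta C+D)$, so that in the notation of Corollary \ref{corx} one has $A=-\tfrac12 C$ and $B=-\tfrac12 D$.

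The key preliminary step is to verify that the hypothesis (\ref{eq20}) of Corollary \ref{corx} holds for the Gaussian family. With $\psi(m)=-\tfrac12 m^{-1}$ and $m\in-\mathcal{S}_d^+$ symmetric, I would compute $\langle m,\psi(m)\rangle=\tr(m^T(-\tfrac12 m^{-1}))=-\tfrac12\tr(I_d)=-\tfrac{d}{2}$, which is constant in $m$. Hence (\ref{eq20}) holds with $C=-d/2$, and Corollary \ref{corx} applies, yielding $J(\theta)=-\frac{d^2}{d\theta^2}\left[k_{\mu}(\psi(\theta A+B))\right]$.

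It then remains to evaluate $k_{\mu}(\psi(\theta A+B))$ explicitly. Substituting $\theta A+B=-\tfrac12(\theta C+D)$ into $\psi(m)=-\tfrac12 m^{-1}$ gives $\psi(\theta A+B)=(\theta C+D)^{-1}$, and since $k_{\mu}(s)=-\tfrac12\log\det s$ together with $\det(X^{-1})=(\det X)^{-1}$, one obtains $k_{\mu}(\psi(\theta A+B))=\tfrac12\log\det(\theta C+D)$. Plugging this into the formula from Corollary \ref{corx} produces (\ref{infoGaussBis}) immediately.

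The computations are all routine; the only thing requiring care is the bookkeeping of the factors $-\tfrac12$ and the matrix inversions, which must be tracked consistently through the composition $k_{\mu}\circ\psi$, so there is no genuine obstacle here. As a consistency check against the preceding corollary, one could alternatively differentiate $\log\det(\theta C+D)$ twice directly, using $\frac{d}{d\theta}\log\det\Sigma=\tr(\Sigma^{-1}C)$ and $\frac{d}{d\theta}\Sigma^{-1}=-\Sigma^{-1}C\Sigma^{-1}$ with $\Sigma=\theta C+D$; the cyclicity of the trace then recovers $\tfrac12\tr(C\Sigma^{-1}C\Sigma^{-1})$, and agreement of the two expressions confirms (\ref{infoGaussBis}).
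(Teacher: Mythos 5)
Your proof is correct and follows essentially the same route as the paper: verify condition (\ref{eq20}) for the Gaussian family, apply Corollary \ref{corx} with $A=-\tfrac12 C$, $B=-\tfrac12 D$, and evaluate $k_{\mu}\circ\psi$ along the segment. In fact your sign is the right one --- $k_{\mu}(\psi(\theta A+B))=+\tfrac12\log\det(\theta C+D)$, since $\psi(\theta A+B)=(\theta C+D)^{-1}$ and $k_\mu(s)=-\tfrac12\log\det s$ --- whereas the paper's proof writes $-\tfrac12\log\det(\theta C+D)$ at this step, a typo that would flip the sign of (\ref{infoGaussBis}); your consistency check against the trace formula confirms your version.
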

\begin{proof}
Observe that the condition (\ref{eq20}) holds for the Gaussian exponential families  $Q(m,t,\nu)$:
$$ \langle m\,,\,\psi(m)\rangle=-\frac12\tr(mm^{-1})=-\frac{d}{2}.$$ 
The model 
$N(u, \theta  C  + D)=N(u,-2m)=Q(m,T,\nu)$ with $m=\theta A +B\in M=-\mathcal{S}_d^+$ where $A=-\frac{C}{2} $ and $B=-\frac{D}{2} $.
 We apply  Corollary \ref{corx} and the fact that
 $$k_{\mu}(\psi(\theta A+B))=-\frac12\log\det(\theta C+D).$$
 Formula (\ref{infoGaussBis}) follows.
\end{proof}
%%%%%%%%%%%%
%%%%%%%%%%%%%%%%%%%%%%%%%%%%%%%%%%%%%%%%%
%%%%%%%%%%%%%%%%%%%%%%%%%%%%%%%%%%%%%%%%%%%%%%%%

Now we characterize  the information $J(\theta)$ in terms of the eigenvalues of
the matrix $D^{-1/2}CD^{-1/2}$.
\begin{thm}\label{eigenvals}
Let $C$ and $D$ be two symmetric  matrices and let $I\subset \R$ be a segment 
such that  $I C+D\subset \mathcal{S}_d^+$.
Let $a_1,\hdots,a_d$ be the eigenvalues of the matrix $D^{-1/2}CD^{-1/2} $.

 The Fisher information   of the Gaussian model $\{N(u, \theta  C  + D),  \theta \in I\}$
  equals
 \begin{equation}\label{infoEigenv}
J(\theta)
=\frac{1}{2}\sum_{j=1}^d\left(\frac{a_j}{1+a_j\theta}\right)^2.
\end{equation}
\end{thm}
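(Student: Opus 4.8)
The plan is to start from the trace representation of the Fisher information established in the preceding corollary, namely
$$J(\theta)=\frac{1}{2}\tr\left(C(\theta C+D)^{-1}C(\theta C+D)^{-1}\right),$$
and to reduce it to a sum over eigenvalues after stripping off $D$. Since the statement involves $D^{-1/2}$, I take $D\in\mathcal{S}_d^+$, so that $D^{1/2}$ and $D^{-1/2}$ are well-defined symmetric matrices. Setting $M=D^{-1/2}CD^{-1/2}$, which is symmetric with eigenvalues $a_1,\dots,a_d$, I would write $C=D^{1/2}MD^{1/2}$ and factor
$$\theta C+D=D^{1/2}(\theta M+I_d)D^{1/2},$$
whence $(\theta C+D)^{-1}=D^{-1/2}(\theta M+I_d)^{-1}D^{-1/2}$. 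The hypothesis $IC+D\subset\mathcal{S}_d^+$ guarantees that $\theta M+I_d$ is invertible for $\theta\in I$.

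Next I would substitute these expressions into the trace. A short computation gives
$$C(\theta C+D)^{-1}=D^{1/2}M(\theta M+I_d)^{-1}D^{-1/2},$$
and squaring this and using the cyclic property of the trace, the outer factors $D^{1/2}$ and $D^{-1/2}$ cancel, leaving
$$J(\theta)=\frac{1}{2}\tr\left(M(\theta M+I_d)^{-1}M(\theta M+I_d)^{-1}\right).$$

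Finally, since $M$ is symmetric I would diagonalize it as $M=O\Lambda O^T$ with $O$ orthogonal and $\Lambda=\mathrm{diag}(a_1,\dots,a_d)$. Both $M$ and $(\theta M+I_d)^{-1}=O(\theta\Lambda+I_d)^{-1}O^T$ are then simultaneously diagonal in the basis given by $O$, so the matrix inside the trace equals $O\,\Lambda(\theta\Lambda+I_d)^{-1}\Lambda(\theta\Lambda+I_d)^{-1}O^T$, whose $j$-th diagonal entry is $a_j^2/(1+\theta a_j)^2$. Taking the trace yields $\frac12\sum_{j=1}^d a_j^2/(1+\theta a_j)^2$, which is exactly (\ref{infoEigenv}).

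There is no serious obstacle; the argument is a routine reduction. The only points requiring a little care are the legitimacy of writing $D^{1/2}$ and $D^{-1/2}$, i.e. ensuring $D$ is positive definite (implicit in the appearance of $D^{-1/2}$ in the statement), and the observation that $M$ commutes with $(\theta M+I_d)^{-1}$, since the latter is a function of $M$; this is what makes the simultaneous diagonalization immediate.
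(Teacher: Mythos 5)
Your argument is correct, but it follows a genuinely different route from the paper's. The paper starts from the alternative formula $J(\theta)=-\tfrac12\frac{d^2}{d\theta^2}\log\det(\theta C+D)$ (the corollary derived from condition (\ref{eq20}) via Corollary \ref{corx}), establishes the factorization $\det(\theta C+D)=\det D\prod_{j=1}^d(1+\theta a_j)$ through the characteristic polynomial of $D^{-1/2}CD^{-1/2}$, and then differentiates the resulting sum of logarithms twice; the whole computation is scalar once that determinant identity is in hand. You instead start from the trace formula $J(\theta)=\tfrac12\tr\bigl(C(\theta C+D)^{-1}C(\theta C+D)^{-1}\bigr)$, use the congruence $\theta C+D=D^{1/2}(\theta M+I_d)D^{1/2}$ with $M=D^{-1/2}CD^{-1/2}$, cancel the outer factors by cyclicity of the trace, and diagonalize $M$. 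Both are complete; your reduction is more elementary in that it bypasses Corollary \ref{corx} and the condition $\langle m,\psi(m)\rangle=\mathrm{const}$ entirely, it makes transparent \emph{why} only the eigenvalues of $D^{-1/2}CD^{-1/2}$ can appear, and it transfers verbatim to the Wishart formula (\ref{wish2}), which has the same trace structure with $\tfrac12$ replaced by $p$. The paper's route buys a cleaner link to the general exponential-family machinery and the pleasant intermediate identity for $\det(\theta C+D)$. Your remarks on the two delicate points --- positive definiteness of $D$ (implicit in the statement, and equally implicit in the paper's use of $D^{-1/2}$ and $(\det D)^{-1}$) and the commutation of $M$ with $(\theta M+I_d)^{-1}$ --- are exactly the right ones; note also that invertibility of $\theta M+I_d$ follows directly from $\theta M+I_d=D^{-1/2}(\theta C+D)D^{-1/2}\in\mathcal{S}_d^+$.
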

\begin{proof}
The idea of the proof is to use  formula (\ref{infoGaussBis}).
Let $P(\lambda)$ be the characteristic polynomial of the matrix $D^{-1/2}CD^{-1/2}$. We have
$$P(\lambda)=\det (D^{-1/2}CD^{-1/2}-\lambda I_n)=\det(D^{-1}C-\lambda I_n)=(\det D)^{-1}\det (C-\lambda D) 
.$$
On the other hand 
$ P(\lambda)=\prod_{j=1}^n(a_j-\lambda)$. It follows that
$$\det(\theta C+D)=\det D\times \theta^{d}P(-1/\theta)=
 \det D\prod_{j=1}^{d}(\theta a_j+1).$$
 The last formula allows to compute easily $ \frac{d^2}{d\theta^2} (\log\det(\theta C+D))$. First we see that
 $$
  \frac{d}{d\theta} (\log\det(\theta C+D))=\frac{\frac{d}{d\theta}\det(\theta C+D)}{\det(\theta C+D)}=\sum_{j=1}^{d}\frac{a_j}{\theta  a_j+1}.
 $$
 One more derivation and  formula (\ref{infoGaussBis})  lead to  (\ref{infoEigenv}).
\end{proof}
%%%%%%%%%%%%%%%%%%%%%%%%%%%%%%%%%%%%%%%%%%%%%%%%%%%%%%%%%%%
%%%%%%%%%%%%%%%%%%%%%%%%%%%%%%%%%%%%%%%%%%%%%%%%%%%%%%%%%%%%%
We finish by computing the Fisher information of   two  Gaussian models in $\R^d$,  parametrized by an explicitely given  segment of covariances.
First, let $A$ be a circulant matrix with the first row $e_2+e_d=(0,1,0,\ldots,0,1)$. Then for a segment $I\subset \R$ containing 0 and  $\theta\in I$ 
 \begin{equation}\label{firstcase}
\theta  A+I_d=\begin{pmatrix}
1 & \theta & 0 &\hdots& 0 & \theta\cr
\theta & 1 & \theta & 0 &\hdots& 0\cr 
 0 & \theta & 1 & \theta & 0&\hdots\cr 
& & \ddots&\ddots & \ddots& \cr
0&\hdots & 0 & \theta & 1 & \theta\cr
 \theta & 0 &\hdots& 0 & \theta & 1
\end{pmatrix}\in  \mathcal{S}_d^+. 
\end{equation}
\begin{cor}\label{corcircul}
 The Fisher information of the model $\left(N(0 ,\theta A+I_d)\right)_{\theta\in I}$ is given by
\begin{eqnarray}\label{circulant}
 J(\theta)&=&\frac12\sum_{j=0}^{d-1} \left(\frac{2\cos(\frac{2\pi j}{d}) }{1+2\theta \cos(\frac{2\pi j}{d})}\right)^2.
\end{eqnarray}
\end{cor}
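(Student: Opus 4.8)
The plan is to apply Theorem \ref{eigenvals} directly, with the identifications $C = A$ and $D = I_d$. Since $D = I_d$ we have $D^{-1/2} C D^{-1/2} = A$, so the eigenvalues $a_1,\dots,a_d$ entering formula (\ref{infoEigenv}) are simply the eigenvalues of the circulant matrix $A$. The entire proof thus reduces to determining the spectrum of $A$ and substituting it into (\ref{infoEigenv}).

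Next I would compute the eigenvalues of $A$ using the classical diagonalization of circulant matrices by the discrete Fourier transform. A circulant matrix whose first row is $(c_0,c_1,\dots,c_{d-1})$, with $(i,j)$-entry $c_{(j-i)\bmod d}$, is diagonalized by the Fourier vectors $v_j=(1,\omega^j,\omega^{2j},\dots,\omega^{(d-1)j})^T$, where $\omega=e^{2\pi i/d}$, with associated eigenvalue $\lambda_j=\sum_{k=0}^{d-1}c_k\,\omega^{jk}$; this is verified by a one-line computation of $(Av_j)_i=\omega^{ji}\sum_k c_k\omega^{jk}$. For our matrix the only nonzero entries of the first row are $c_1=c_{d-1}=1$, hence $\lambda_j=\omega^j+\omega^{(d-1)j}=\omega^j+\omega^{-j}=2\cos\!\big(\tfrac{2\pi j}{d}\big)$ for $j=0,1,\dots,d-1$. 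These are real, as they must be since $A$ is symmetric, so indeed $a_j=2\cos(2\pi j/d)$ after reindexing the eigenvalues by $j=0,\dots,d-1$.

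Finally, substituting $a_j=2\cos(2\pi j/d)$ into (\ref{infoEigenv}) yields formula (\ref{circulant}) at once. There is no genuine obstacle here: the statement is a direct corollary of Theorem \ref{eigenvals}, and the only points requiring a moment of care are the bookkeeping of the circulant eigenvalues via the correct shift convention and the observation that the positive-definiteness hypothesis $IC+D\subset\mathcal{S}_d^+$ of Theorem \ref{eigenvals} matches exactly the condition imposed on the segment $I$ in (\ref{firstcase}). Indeed $\theta A+I_d$ has eigenvalues $1+2\theta\cos(2\pi j/d)$, so the matrix displayed in (\ref{firstcase}) lies in $\mathcal{S}_d^+$ precisely when all of these are positive, which is the constraint defining the admissible segment $I$.
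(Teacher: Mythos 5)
Your proposal is correct and follows essentially the same route as the paper: both identify $D^{-1/2}CD^{-1/2}=A$, diagonalize the circulant matrix $A$ via the roots of unity to obtain the eigenvalues $2\cos(2\pi j/d)$, and substitute into formula (\ref{infoEigenv}) of Theorem \ref{eigenvals}. Your explicit verification of the positivity condition $1+2\theta\cos(2\pi j/d)>0$ defining $I$ is a small welcome addition the paper leaves implicit.
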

\begin{proof}
Let ${\mathcal A}$ be a circulant matrix with the first row $(r_0,r_1,\ldots, r_{d-1})$.
It is well known (see e.g.
\citep{circulant}) and easy to check that if $\epsilon$ is a $d$-th root of unity,
 $\epsilon^{d}=1$, then  
$a= \sum_{l=0}^{d-1} r_{l}\epsilon^{l}$ is an eigenvalue of  ${\mathcal A}$ with an eigenvector
$(1,\epsilon, \epsilon^2,\ldots,  \epsilon^{d-1})$.

Therefore if   $\epsilon_j=e^{\frac{2\pi j i}{d}}$, $j=0,\hdots,d-1$ are the $d$ distinct $d$-th roots of unity, then 
the matrix  ${\mathcal A}$ has $d$ distinct eigenvalues  $a_j= \sum_{l=0}^{d-1} r_{l}\epsilon_j^{l}$.
In our particular case,
\begin{eqnarray*}
     a_j= e^{\frac{2\pi j i }{d}}+e^{\frac{2(d-1)\pi j i }{d}}= 2\cos\left(\frac{2\pi j}{d}\right).
    \end{eqnarray*}
Formula (\ref{circulant}) follows from Theorem  \ref{eigenvals}. 
\end{proof}

Now, let us consider  a tridiagonal matrix $C$ such that
\begin{equation}\label{secondcase} 
 \theta C+I_d=\begin{pmatrix}
1 & \theta & 0 & 0 & 0&\hdots\cr 
\theta & 1 & \theta & 0 & 0&\hdots\cr 
0 & \theta & 1 & \theta & 0&\hdots\cr 
&\ddots&\ddots&\ddots&\ddots& \cr
0&\hdots & 0 & \theta & 1 & \theta\cr
0&\hdots & 0 & 0 & \theta & 1
\end{pmatrix}.
\end{equation}
\
As in the preceding case, there exists a segment $I\subset \R$ such that  $ \theta C+I_d\in  \mathcal{S}_d^+$ for $\theta\in I$.

\begin{cor}\label{cortridiag}
 The Fisher information of the model $\left(N(0 ,\theta C+I_d)\right)_{\theta\in I}$ is given by
\begin{equation}\label{tridiag}
 J(\theta)=
\frac12\sum_{j=1}^d\left(\frac{2\cos\left(\frac{j}{d+1}\pi \right)}{1+2\theta\cos\left(\frac{j}{d+1}\pi \right)}\right)^2.
\end{equation}

\end{cor}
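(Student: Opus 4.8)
The plan is to proceed exactly as in the proof of Corollary \ref{corcircul}, by invoking Theorem \ref{eigenvals} with $D=I_d$. Since $D^{-1/2}CD^{-1/2}=C$ in this case, the whole task reduces to computing the $d$ eigenvalues $a_1,\ldots,a_d$ of the tridiagonal matrix $C$ whose diagonal entries vanish and whose sub- and super-diagonal entries all equal $1$. Once these are known, substituting them into formula (\ref{infoEigenv}) yields (\ref{tridiag}) with no further work.

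To find the eigenvalues, I would write the eigenvalue equation $Cv=a\,v$ componentwise. Because of the structure of $C$, this is the second-order linear recurrence $v_{k-1}+v_{k+1}=a\,v_k$ for $1\le k\le d$, subject to the boundary conditions $v_0=v_{d+1}=0$, which encode the absence of matrix entries beyond the first and last rows. I would then look for solutions of the form $v_k=\sin(k\phi)$. The trigonometric identity $\sin((k-1)\phi)+\sin((k+1)\phi)=2\cos(\phi)\sin(k\phi)$ shows that such a vector satisfies the recurrence precisely when $a=2\cos\phi$, and the first boundary condition $v_0=\sin 0=0$ holds automatically. The remaining condition $v_{d+1}=\sin((d+1)\phi)=0$ forces $(d+1)\phi$ to be an integer multiple of $\pi$, i.e. $\phi=\frac{j\pi}{d+1}$ for $j=1,\ldots,d$. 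This produces $d$ distinct values
$$a_j=2\cos\left(\frac{j\pi}{d+1}\right),\qquad j=1,\ldots,d,$$
and since $C$ is a real symmetric $d\times d$ matrix, these distinct numbers are exactly its full spectrum.

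Finally I would substitute $a_j=2\cos\!\left(\frac{j\pi}{d+1}\right)$ into (\ref{infoEigenv}), which gives (\ref{tridiag}) directly. I do not anticipate any real obstacle here: the computation of the tridiagonal spectrum is classical, and the only point requiring a little care is the correct reading of the boundary conditions $v_0=v_{d+1}=0$ from the shape of $C$, so that $j$ runs over exactly $1,\ldots,d$ and all $d$ eigenvalues are accounted for with the right distinctness.
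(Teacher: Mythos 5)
Your proposal is correct. Applying Theorem \ref{eigenvals} with $D=I_d$ is exactly the right reduction, and your boundary conditions $v_0=v_{d+1}=0$ correctly encode the first and last rows of the tridiagonal matrix $C$; the resulting spectrum $a_j=2\cos\left(\frac{j\pi}{d+1}\right)$, $j=1,\dots,d$, consists of $d$ distinct values of $\cos\phi$ for $\phi\in(0,\pi)$, with nonzero eigenvectors (e.g.\ $v_1=\sin\phi\neq 0$), so it is indeed the full spectrum. Substitution into (\ref{infoEigenv}) then gives (\ref{tridiag}).

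Where you differ from the paper is in how the spectrum of $C$ is obtained. The paper works with the characteristic polynomial: it expands $\psi_d(\lambda)=\det(C-\lambda I_d)$ along the first row to get the three-term recurrence $\psi_d(\lambda)=-\lambda\psi_{d-1}(\lambda)-\psi_{d-2}(\lambda)$, recognizes $\varphi_d(\lambda)=(-1)^d\psi_d(2\lambda)$ as a Chebyshev polynomial of the second kind, and reads off the roots from $\varphi_d(\cos x)=\frac{\sin((d+1)x)}{\sin x}$. You instead exhibit the eigenvectors directly via the ansatz $v_k=\sin(k\phi)$. The two arguments are really dual forms of the same classical fact (the sine identity you use is precisely what makes $U_d(\cos x)=\frac{\sin((d+1)x)}{\sin x}$ work), but yours is more self-contained --- it needs no determinant recursion and no citation of Chebyshev polynomial theory --- and it produces the eigenvectors as a by-product. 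The paper's route has the mild advantage of staying entirely within the characteristic-polynomial framework already set up for the circulant case, and of not requiring one to guess the form of the eigenvectors. Either way the corollary is proved.
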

\begin{proof}
We will apply Theorem \ref{eigenvals} with $C$ and  $D=I_d$. 
Expanding $\psi_d(\lambda)=\det\left(C-\lambda I_d\right)$ along the first row, we get
$\psi_d(\lambda)=-\lambda \psi_{d-1}(\lambda)-M^{1,2}$.
 Expanding the minor $M^{1,2}$ along its first column gives  $M^{1,2}=\psi_{d-2}(\lambda)$
and
\begin{equation*}
 \psi_d(\lambda)=-\lambda \psi_{d-1}(\lambda)-\psi_{d-2}(\lambda),\,\,d\geq 3.
\end{equation*}
 We set $\varphi_d(\lambda)=(-1)^d\psi_d(2\lambda)$ and  we obtain
\begin{equation*}
 \varphi_d(\lambda)=2\lambda \varphi_{d-1}(\lambda)-\varphi_{d-2}(\lambda),\,\,d\geq 3
\end{equation*}
with initial conditions
$\varphi_1(\lambda)=2\lambda$, $\varphi_2(\lambda)=4\lambda^2-1$.
 Therefore $\varphi_d$ is a Tchebyshev polynomial of the second kind  \citep{mason_chebyshev_2003} and it
satisfies $\varphi_d(\cos x)=\frac{\sin(d+1)x}{\sin x},\,\,d\geq 1$.

We have, for all $\lambda\in [-2 , 2]$, 
\begin{eqnarray*}
 \psi_d(\lambda)=0 \iff \varphi_d\left(\frac{\lambda}{2}\right)=0 &\implies& \frac{\sin(d+1)x}{\sin x}=0,\quad x=\arccos \frac{\lambda}{2}.\\
%&\implies& \arccos \frac{\lambda}{2}= \frac{j}{d+1}\pi,\quad j\in\mathbb{Z}\\
%&\implies& \frac{\lambda}{2}=\cos\left(\frac{j}{d+1}\pi \right).
\end{eqnarray*}
Therefore $\lambda_j=2\cos\left(\frac{j}{d+1}\pi \right)$, $1\leq j\leq d$, are $d$ distinct eigenvalues of the 
matrix $C$.
\end{proof}

\subsection{Exponential families of Wishart distributions}

Let $E=\mathcal{S}_d$ be the space of symmetric real  matrices of order $d$. The Riesz measures $\mu_p$ on the cone
$\overline{\mathcal{S}_d^+}$ are unbounded positive measures such that their Laplace transform equals for $t\in \mathcal{S}_d^+$
$$
\LL\mu_p(t)= \int_{\overline{\mathcal{S}_d^+}} e^{-\langle t,x\rangle} d \mu_p(x)= (\det t)^{-p}.
$$
By the celebrated Gindikin theorem, such measures exist if and only if  $p$ belongs to the Gindikin set
 $\Lambda_d=\{\frac{1}{2},\hdots,\frac{d-1}{2}\} \cup \left(\frac{d-1}{2}, \infty\right)$. They are supported by 
 the  cone $\overline{\mathcal{S}_d^+}$  if and only if $p>\frac{d-1}{2}$ and   they are absolutely continuous in that case,
 with a density $\Gamma_d(p)^{-1}(\det x)^{p-\frac{d+1}{2}}$, $x\in \mathcal{S}_d^+$, $\Gamma_d(p)=\Gamma(p)\Gamma(p-\frac12)\ldots \Gamma(p-\frac{d-1}2)$.
 Otherwise, when $p\in \{\frac{1}{2},\hdots,\frac{d-1}{2}\}$,  the measures  $\mu_p$ are singular and concentrated on semipositive symmetric matrices of rank $2p$.
 
The family of   Wishart distributions $W(p;s)$ on $\overline{\mathcal{S}_d^+}$  is defined as the  natural  exponential family  generated by the Riesz measure
$\mu_p$. According to (\ref{param1}), it means that $p\in \Lambda_d$,   $s\in S_{\mu_p}= -\mathcal{S}_d^+$  and   
$$
W(p;s)(dx)= \frac{ e^{\langle s \,,\, x\rangle}}{\LL\mu_p(-s)}\mu_p(d x)=  e^{\langle s \,,\, x\rangle} (\det(-s))^p\mu_p(d x)=   e^{\langle s \,,\, x\rangle-k_{\mu_p}(s)}\mu_p(d x), 
$$
with $k_{\mu_p}(s)= -p \log \det(-s)$. It follows that $\LL W(p;s) (t)=\det(I_d+ (-s)^{-1}t)^{-p}$ and that $\mu_p(dx)=e^{\tr x} W(p;-I_d)$. 
 
 Wishart distributions are multivariate analogs of the gamma distributions 
 $\lambda^p\Gamma(p)^{-1}e^{-\lambda x}x^{p-1}dx$
 on $\R^+$( $p>0,\lambda>0)$, considered with a canonical parameter $s=-\lambda<0$.
 Similarly as in dimension 1,  the Wishart distributions are often parametrized by a scale parameter $\sigma= (-s)^{-1}\in \mathcal{S}_d^+$
and then the notation $\gamma(p;\sigma)=W(p; (-\sigma)^{-1})$ is used, cf. \citep{letac_noncentral}. 
 The study  of Wishart distributions is motivated by their importance as estimators of the covariance matrix of a Gaussian model in $\R^d$.\\
 
 Let us apply our results on the Fisher information to a   natural  exponential family  of   Wishart distributions $\{W(p;s):\  s\in -\mathcal{S}_d^+\}. $ 
 The mean equals $m(s)=k'_{\mu_p}(s)= p(-s)^{-1}\in M= \mathcal{S}_d^+$ and  the inverse mean map $\psi: \mathcal{S}_d^+\ra - \mathcal{S}_d^+$  is $\psi(m)=  -pm^{-1}$.
 
 Thus the Wishart family $Q(m,\mu_p)$ parametrized by the domain of means
 is, up to a trivial reparametrization $m\ra \frac1p m$, the family parametrized by its scale parameter:
 \begin{equation}\label{WishartMean}
 Q(m,\mu_p)=W(p; -pm^{-1})=\gamma(p;\frac1p m),\ \ \ m\in \mathcal{S}_d^+. 
  \end{equation}
 As $v(s)=k_{\mu_p}''(s) = p (s^{-1}\otimes s^{-1})$, it follows that
  the variance function 
 is
 \begin{equation}\label{VarWish}
  V(m)= \frac{1}{p} (m\otimes m).
 \end{equation}

 %%%%%%%%%%%%%%%%%%%%%%%%%%%%%%%%%%%%%%%%%%%%%%%%
 %%%%%%%%%%%%%%%%%%%%%%%%%%%%%%%%%%%%%%%%%%%%%%%%%
 
 By Proposition \ref{FisherP},  the Fisher information of the model $\{W(p;s):\  s\in -\mathcal{S}_d^+\} $ is
  $I(s)= ps^{-1}\otimes s^{-1}$.
 By Theorem \ref{th1} the Fisher information of the model $\{Q(m,\mu_p),\; {m\in M}\}$
 is $J(m)= p m^{-1}\otimes m^{-1}$.
%  \item $d\psi(m)\,(h)=-p s^{-1}h s^{-1}.$
 
Consequently, using Theorem \ref{c1} and a reparametrization $m\ra \frac1p m=\sigma $  we  see
that the 
 the Fisher information matrix of the Wishart model $\{ \gamma(p;\sigma):\ \sigma\in \mathcal{S}_d^+ \}$
 parametrized by a scale parameter $\sigma$ equals
 $J(\sigma)=p \sigma^{-1}\otimes \sigma^{-1}$.

 \begin{thm}
 Let  $I=(a,b)\subset\R$ and $C,D\in \mathcal{S}_d$ such that $IC+D\subset\mathcal{S}_d^+$.
 The Fisher information $J(\theta)$ of the
 Wishart model $\{ \gamma(p; \theta C +D):\ \theta \in I\}$  verifies the formulas
 \begin{eqnarray}
  J(\theta)&=& p\tr\left(C(\theta C +D)^{-1}  \right)^2 \label{wish1}\\
   J(\theta)&=& - p \frac{d^2}{d\theta^2} (\log\det(\theta C+D))\nonumber\\
   J(\theta)
&=& p \sum_{j=1}^d\left(\frac{a_j}{1+a_j\theta}\right)^2 \label{wish2}
 \end{eqnarray}
 where $a_1,\hdots,a_d$ are the eigenvalues of the matrix $D^{-1/2}CD^{-1/2} $.
 \end{thm}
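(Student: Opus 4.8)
The plan is to mirror, term for term, the three Gaussian corollaries proved above, replacing the Gaussian factor $\tfrac12$ by $p$ throughout. The starting point is the Fisher information $J(\sigma)=p\,\sigma^{-1}\otimes\sigma^{-1}$ of the scale-parametrized Wishart family $\{\gamma(p;\sigma):\sigma\in\mathcal{S}_d^+\}$ established just above, which plays exactly the role of Corollary \ref{sigma} in the Gaussian setting. Each of the three displayed identities then descends from one of the earlier tools applied to the reparametrization $f(\theta)=\theta C+D$.

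For the first formula I would apply Theorem \ref{c1} to $f(\theta)=\theta C+D$, whose derivative is $f'(\theta)=\vc(C)$, so that
$$J(\theta)=\vc(C)^T\,J(\theta C+D)\,\vc(C)=p\,\vc(C)^T\bigl((\theta C+D)^{-1}\otimes(\theta C+D)^{-1}\bigr)\vc(C).$$
Using the Kronecker identity (\ref{kron}) to rewrite $\bigl((\theta C+D)^{-1}\otimes(\theta C+D)^{-1}\bigr)\vc(C)=\vc\bigl((\theta C+D)^{-1}C(\theta C+D)^{-1}\bigr)$, and then reading off $\vc(C)^T\vc(\cdot)$ as a scalar product (recall $C$ is symmetric), one obtains $J(\theta)=p\tr\bigl(C(\theta C+D)^{-1}C(\theta C+D)^{-1}\bigr)$, which is (\ref{wish1}). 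This is the verbatim analogue of the first Gaussian corollary.

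For the second formula the plan is to invoke Corollary \ref{corx}, so the first task is to verify its hypothesis (\ref{eq20}) for the Wishart family. Since the inverse mean map is $\psi(m)=-pm^{-1}$, one computes $\langle m,\psi(m)\rangle=-p\tr(mm^{-1})=-pd$, a constant, so (\ref{eq20}) holds. The one step requiring care is the bookkeeping of the several parametrizations: the model must be expressed as $Q(m,\mu_p)$ with $m=\theta A+B$ lying in the means domain. Because $Q(m,\mu_p)=\gamma(p;\tfrac1p m)$, the scale $\theta C+D$ corresponds to $m=p(\theta C+D)$, that is $A=pC$ and $B=pD$. Then $\psi(\theta A+B)=-p\bigl(p(\theta C+D)\bigr)^{-1}=-(\theta C+D)^{-1}$, and since $k_{\mu_p}(s)=-p\log\det(-s)$ this gives $k_{\mu_p}(\psi(\theta A+B))=p\log\det(\theta C+D)$. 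Corollary \ref{corx} then delivers $J(\theta)=-p\,\frac{d^2}{d\theta^2}\bigl(\log\det(\theta C+D)\bigr)$.

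Finally, formula (\ref{wish2}) follows from the second formula by the identical computation used in the proof of Theorem \ref{eigenvals}: the factorization $\det(\theta C+D)=\det D\prod_{j=1}^d(1+a_j\theta)$ in terms of the eigenvalues $a_j$ of $D^{-1/2}CD^{-1/2}$ gives $\frac{d}{d\theta}\log\det(\theta C+D)=\sum_{j=1}^d a_j/(1+a_j\theta)$, and one further differentiation produces $-\sum_{j=1}^d\bigl(a_j/(1+a_j\theta)\bigr)^2$, whose sign cancels the minus sign from the second formula to yield (\ref{wish2}). The only genuine obstacle in the whole argument is the factor-tracking along the reparametrization chain $m\mapsto\sigma=m/p\mapsto s=\psi(m)=-pm^{-1}$; once $A=pC$ and $B=pD$ are correctly identified, every step reduces to its Gaussian counterpart with the factor $\tfrac12$ replaced by $p$.
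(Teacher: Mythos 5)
Your proposal is correct and follows essentially the same route as the paper: the paper's own proof simply states that the arguments mirror the Gaussian subsection, records that condition (\ref{eq20}) holds with $\langle m,\psi(m)\rangle=-pd$, identifies the model with $\{Q(\theta pC+pD,\mu_p)\}$ (your $A=pC$, $B=pD$), and notes $k_{\mu}(\psi(\theta pC+pD))=p\log\det(\theta C+D)$. You have merely written out in full the details the paper leaves implicit, and your factor-tracking through the reparametrizations is accurate.
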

 \begin{proof}
  The proofs are similar to the proofs of the analogous results for exponential Gaussian families in the previous subsection.
  The condition  (\ref{eq20}) holds  true:
$ \langle m\,,\,\psi(m)\rangle= -pd$, the model  $\{ \gamma(p; \theta C +D):\ \theta \in I\}$  is equal to the model
$\{ Q( \theta pC +pD,\mu_p):\ \theta \in I\}$ parametrized by the means  and
 we have
 $k_{\mu}(\psi(\theta pC+pD))=p\log\det(\theta C+D).$
 \end{proof}

\begin{cor}
 Let $\sigma_1, \sigma_2\in \mathcal{S}^+_d$ and let $I$ be the open interval containing $\theta$ such that
$\sigma_\theta=\theta\sigma_1+(1-\theta)\sigma_2\in S^+_d.$
The Fisher information of the  model  $\{ \gamma(p; \sigma_\theta):\ \theta\in I\}$
is equal to
$
 J(\theta)= p\tr\left( \left((\sigma_1-\sigma_2)\,\sigma_t^{-1} \right)^2\right). 
$
\end{cor}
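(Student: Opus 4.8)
The plan is to recognize the segment $\sigma_\theta=\theta\sigma_1+(1-\theta)\sigma_2$ as a special case of the affine family $\theta C+D$ already treated in the preceding theorem, and then simply read off formula (\ref{wish1}).

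First I would rewrite the convex combination in the affine form required by (\ref{wish1}). Since
$$\theta\sigma_1+(1-\theta)\sigma_2=\theta(\sigma_1-\sigma_2)+\sigma_2,$$
the model $\{\gamma(p;\sigma_\theta):\theta\in I\}$ coincides with $\{\gamma(p;\theta C+D):\theta\in I\}$ for the choice $C=\sigma_1-\sigma_2$ and $D=\sigma_2$. Both $C$ and $D$ lie in $\mathcal{S}_d$ because $\sigma_1,\sigma_2\in\mathcal{S}_d^+$, and the positivity hypothesis $IC+D\subset\mathcal{S}_d^+$ of the preceding theorem is exactly the defining property of $I$, namely that $\sigma_t=\sigma_\theta=\theta C+D\in\mathcal{S}_d^+$ for every $\theta\in I$. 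Thus the hypotheses of that theorem are met with no further work.

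Then I would apply formula (\ref{wish1}) directly. With $\theta C+D=\sigma_t$ and $C=\sigma_1-\sigma_2$ it yields
$$J(\theta)=p\,\tr\left(C(\theta C+D)^{-1}\right)^2=p\,\tr\left(\left((\sigma_1-\sigma_2)\,\sigma_t^{-1}\right)^2\right),$$
which is the claimed expression. The argument is essentially immediate, so I do not expect any serious obstacle; the only points requiring care are the algebraic rewriting $\theta\sigma_1+(1-\theta)\sigma_2=\theta(\sigma_1-\sigma_2)+\sigma_2$ and checking that the positivity hypothesis transfers correctly. One should also confirm the reading of the trace-squared notation in (\ref{wish1}) as $\tr\bigl[(C(\theta C+D)^{-1})^2\bigr]$, consistent with the Gaussian analog, rather than as the square of a scalar trace.
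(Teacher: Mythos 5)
Your proposal is correct and matches the paper's proof exactly: the paper also writes $\theta\sigma_1+(1-\theta)\sigma_2=\theta(\sigma_1-\sigma_2)+\sigma_2$ and applies formula (\ref{wish1}). Your additional remarks on verifying the positivity hypothesis and on reading $\tr(\cdot)^2$ as $\tr[(\cdot)^2]$ are sensible but not a departure from the paper's argument.
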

\begin{proof}
We write $\theta\sigma_1+(1-\theta)\sigma_2= \theta(\sigma_1-\sigma_2) + \sigma_2$
and we apply  formula (\ref{wish1}). 
\end{proof}
Using (\ref{wish2}) we obtain the following corollary, analogous to Corollaries
\ref{corcircul} and \ref{cortridiag}. 
\begin{cor}
1. Consider  the model  $\{ \gamma(p; \theta A +I_d):\ \theta \in I\}$  with $\theta A +I_d$ as in (\ref{firstcase}). Then its Fisher information
equals $J(\theta)=p\sum_{j=0}^{d-1} \left(\frac{2\cos(\frac{2\pi j}{d}) }{1+2\theta \cos(\frac{2\pi j}{d})}\right)^2.$\\
2. Consider  the model  $\{ \gamma(p; \theta C +I_d):\ \theta \in I\}$  with $\theta C +I_d$ as in (\ref{secondcase}). Then its Fisher information
equals $J(\theta)=p\sum_{j=1}^d\left(\frac{2\cos\left(\frac{j}{d+1}\pi \right)}{1+2\theta\cos\left(\frac{j}{d+1}\pi \right)}\right)^2.$
\end{cor}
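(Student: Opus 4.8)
The plan is to invoke formula (\ref{wish2}) of the preceding theorem, which expresses the Fisher information of a Wishart segment model $\{\gamma(p;\theta C + D):\theta\in I\}$ as $J(\theta)=p\sum_{j=1}^d\big(a_j/(1+a_j\theta)\big)^2$, where $a_1,\ldots,a_d$ are the eigenvalues of $D^{-1/2}CD^{-1/2}$. In both parts of the statement the matrix $D$ equals $I_d$, so $D^{-1/2}CD^{-1/2}$ collapses to the coefficient matrix itself, namely $A$ in part~1 and $C$ in part~2. Consequently the only ingredient still required is the spectrum of these two matrices, and this has already been determined in the Gaussian setting, so no new computation is needed.

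For part~1 I would recall that the circulant matrix $A$ with first row $e_2+e_d$ is precisely the one treated in the proof of Corollary \ref{corcircul}, where its $d$ distinct eigenvalues were computed via roots of unity to be $a_j = 2\cos(2\pi j/d)$, $j=0,\ldots,d-1$. Inserting these into (\ref{wish2}) yields $J(\theta)=p\sum_{j=0}^{d-1}\big(2\cos(2\pi j/d)/(1+2\theta\cos(2\pi j/d))\big)^2$. For part~2 the tridiagonal matrix $C$ is the one analysed in the proof of Corollary \ref{cortridiag}, where the characteristic polynomial was identified with a Tchebyshev polynomial of the second kind, yielding eigenvalues $2\cos(j\pi/(d+1))$, $j=1,\ldots,d$. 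Substituting these into (\ref{wish2}) produces the second asserted formula.

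There is essentially no obstacle here: the substantive work, namely the two eigenvalue computations, is already carried out in Corollaries \ref{corcircul} and \ref{cortridiag}, and the index ranges $0\le j\le d-1$ and $1\le j\le d$ each contribute $d$ terms, matching the sum in (\ref{wish2}). The only point meriting a remark is that the hypothesis $\theta A + I_d\in\mathcal{S}_d^+$ (resp.\ $\theta C+I_d\in\mathcal{S}_d^+$) for $\theta\in I$ guarantees that each denominator $1+a_j\theta$ stays positive on $I$, so that the expressions are well defined and the application of (\ref{wish2}) is legitimate throughout the segment.
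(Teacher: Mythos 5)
Your proposal is correct and follows exactly the route the paper takes: the paper derives this corollary by substituting into formula (\ref{wish2}) with $D=I_d$ the eigenvalues $2\cos(2\pi j/d)$ and $2\cos(j\pi/(d+1))$ already computed in the proofs of Corollaries \ref{corcircul} and \ref{cortridiag}. Your added remark on the positivity of the denominators is a harmless bonus not present in the paper.
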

\begin{rem}\label{GaussWishart}
Let $P(s,\mu)$ be the natural exponential family corresponding to 
the Gaussian general exponential family (\ref{eqt1}).  If $W$ has the law  $N(u,s^{-1})$ given by (\ref{eqt1}),
then $T(W)$ has the law $P(s,\mu)$. On the other hand it is well known that $-T(W)=\frac12 (W-u) (W-u)^T$ has the Wishart law $\gamma(\frac12; 2s^{-1})$. 
This explains why the formulas for the Fisher information are the same for  the Gaussian family and for the  Wishart family with $p=\frac12$. 
%cf. Theorem \ref{th1}.
\end{rem}
$\;$\\
{\bf Exponential families of noncentral Wishart distributions}. 
Let us finish the section on the Wishart models by considering the non-central case. The main reference is
 \citep{letac_noncentral}.

Let $p\in\Lambda_d$, $a\in \overline{\mathcal{S}_d^+}$ and $\sigma \in \mathcal{S}_d^+$.
The noncentral  Wishart distribution
$\gamma(p, a; \sigma)$ is defined by its Laplace transform 
\begin{equation*}
\LL \gamma(p, a; \sigma) (t)= \int_{\overline{\mathcal{S}_d^+}} e^{-\tr(t x)}\gamma(p,a;\sigma)(dx)
=\det(I_d+\sigma t)^{-p}e^{-\tr\left(t(I_d+\sigma\,t )^{-1}\sigma a \sigma\right)},
\end{equation*}
for all $t\in  \mathcal{S}_d^+$. When $p\ge \frac{d-1}{2}$, then  non-central Wishart laws exist for all  $a\in \overline{\mathcal{S}_d^+}$;
    when $p\in  \{\frac{1}{2},\hdots,\frac{d-2}{2}\}$ then $a$ must be of rank at most $2p$ \citep{letac_existence}.
When $p=\frac{n}2$, $n\in \N$,  the  non-central Wishart distributions are constructed in the following way from
 $n$ independent $d$-dimensional Gaussian vectors
 $Y_1,\hdots, Y_n$. Let
$Y_j \sim N(m_j\,,\, \Sigma)$ and let $M$ be the $d\times n$ matrix $[m_1,\hdots,m_n]$.
Then, the $d\times d$ matrix 
$W = Y_1Y_1^T + \hdots + Y_nY_n^T$ has the noncentral Wishart distribution
$\gamma(p, a; \sigma)$ with $p=\frac{n}{2}$, $\sigma=2\Sigma$ and 
$\sigma a \sigma = MM^T$. Such Wishart distributions are studied in \citep{Muir}.

The  non-central Wishart distributions may be constructed as
   a natural exponential
family $\{W(p,a; s):\ s\in -\mathcal{S}_d^+\}$ generated by the positive measure 
$\mu=\mu_{a,p}(dx)=e^{\tr(a+ x)}\gamma(p,a ; I_d)(dx)$. 
Its moment generating function 
is given for $ s\in -\mathcal{S}_d^+$ by
\begin{equation*}
 \int_{\overline{\mathcal{S}_d^+}} e^{\tr(s x)}\mu_{a,p}(dx)
=\det(-s)^{-p}e^{\tr\left(a(-s) ^{-1}\right)}.
\end{equation*}
We have  $W(p,a; s)=\gamma(p, a; (-s)^{-1})$.
Like for central Wishart families, $S_{\mu} = -\mathcal{S}_d^+$.
The cumulant generating function is 
\begin{equation*} 
k_{\mu}(s) =-p \log \det(-s)+\tr(a(-s)^{-1}).
\end{equation*}

As before, we  denote $\sigma=(-s)^{-1}$.
We see that the mean equals
\begin{equation}\label{meanNonCentral}
 m(s)=k'_{\mu}(s) =p (-s)^{-1}+(-s)^{-1} a(-s)^{-1}= p\sigma+\sigma a \sigma  
\end{equation}
and the covariance
\begin{equation}\label{varianceNonCentral}
v(s)=k''_\mu(s)=p \sigma \otimes \sigma + (\sigma a \sigma)\otimes\sigma+ \sigma\otimes (\sigma a \sigma)
=-p \sigma \otimes \sigma + m\otimes\sigma+ \sigma\otimes m.
\end{equation}
When the matrix $a$ is non-singular, the inverse mean map $\psi(m)=s$ is such that
\begin{equation}\label{inverseBIS}
(-s)^{-1}=\sigma=-\frac{p}2 a^{-1} + a^{-1/2}\left( a^{1/2}ma^{1/2} + \frac{p^2}4I_d\right)^{1/2} a^{-1/2}. 
\end{equation}
For  other cases see   \citep[Prop.4.5]{letac_noncentral}. In order to write the variance function
$V(m)=v(\psi(m))$ we compose the last expression from  (\ref{varianceNonCentral}) and   the formula (\ref{inverseBIS}).

For a model $\{ W(p,a;\psi(\theta A+B)):\, \theta\in I\}$   parametrized by a segment of means, the   Fisher information $J(\theta)$ is obtained from the expression of $V(m)$
and Theorem \ref{thx}.\\[1mm]
%%%%%%%%%%%%%%%%%%%%%%%%%%%%
{\bf Example.} Suppose that $a=I_d$, $A= \alpha I_d$ and $B=\beta I_d$, $\alpha,\beta>0$.
The Fisher information on  $\theta$ is 
$ J(\theta)=\alpha^2d\left( (p^2+2\theta\alpha +2\beta)(\theta\alpha+\beta+\frac{p^2}4)^{1/2} -2p(\theta\alpha+\beta)-\frac{p^3}2\right)^{-1}.$

\subsection{Estimation of the mean in exponential families parametrized by a segment of means}

Consider a sample $X_1,\ldots, X_n$ of a random variable $X$ from a natural  exponential family $Q(m,\mu)$ parametrized by the domain of means $M$,
where the parameter $m=\E X$ is unknown and $M$ is open. The following  qualities  of the sample mean $ \bar X_n$ as an estimator
of $m$ seem to be known; for the sake of completeness we provide a short proof of properties which are less evident.
\begin{pro}
 The sample mean $ \bar X_n$ is an  unbiased, consistent and efficient
 estimator of the parameter $m$.  It is also  a maximum likelihood  estimator of  $m$.
\end{pro}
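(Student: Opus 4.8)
The plan is to verify the four asserted properties in turn, each drawn directly from the exponential-family structure already established. For unbiasedness I would simply invoke formula (\ref{meanX}): since $\E_m(X_i)=m$ for every $i$, linearity of expectation gives $\E_m(\bar X_n)=\frac1n\sum_{i=1}^n\E_m(X_i)=m$, so $\bar X_n$ is unbiased with no further computation. Consistency then follows from the (weak or strong) law of large numbers applied to the i.i.d.\ sample $X_1,\ldots,X_n$; the integrability $\E_m\|X\|<\infty$ needed to invoke it is automatic, because on the interior $M$ of the mean domain the moment generating function is finite in a neighborhood of $\psi(m)$, so all moments of $X$ exist.

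For efficiency I would compute the covariance of the estimator and compare it to the Cram\'er--Rao bound. Since the $X_i$ are i.i.d.\ with $\cov_m(\vc(X_i))=V(m)$ by (\ref{covX})--(\ref{var_fn}), we have $\cov_m(\vc(\bar X_n))=\frac1n V(m)$. On the other hand, the Fisher information carried by the full sample of size $n$ is $nJ(m)$ by additivity over independent observations, and Theorem \ref{th1} gives $J(m)=V(m)^{-1}$; hence the Cram\'er--Rao lower bound is $\frac1n J(m)^{-1}=\frac1n V(m)$. The covariance of $\bar X_n$ meets this bound exactly, so $\bar X_n$ is efficient.

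For the maximum likelihood property I would write the sample log-likelihood from (\ref{param2}) and maximize it. It is cleaner to optimize in the canonical parameter $s=\psi(m)$, where the log-likelihood reads $L(s)=\langle s,\sum_{i=1}^n X_i\rangle-n\,k_\mu(s)$, with derivative $\sum_{i=1}^n X_i-n\,k'_\mu(s)=\sum_{i=1}^n X_i-n\,m(s)$. Setting this to zero yields $m(s)=\bar X_n$, and since the map $s\mapsto k'_\mu(s)=m(s)$ is a diffeomorphism from $S_\mu$ onto $M$ (established earlier), the unique solution is $\hat s=\psi(\bar X_n)$, i.e.\ $\hat m=\bar X_n$. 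Strict convexity of $k_\mu$ makes $-k_\mu$, and hence $L$, strictly concave, so this critical point is the global maximum.

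The main obstacle I anticipate is purely a domain issue in the last step: the score equation $m(s)=\bar X_n$ has a solution only when $\bar X_n$ actually lies in the open mean domain $M$. For finitely many observations this can fail on a set of positive probability, so the maximum likelihood statement is understood on the event $\{\bar X_n\in M\}$, which occurs with probability tending to one by consistency. Everything else reduces to the already-proven identities $\E_m(X)=m$, $V(m)=\cov_m(\vc X)$ and $J(m)=V(m)^{-1}$, combined with the law of large numbers and the matrix Cram\'er--Rao inequality recalled in the introduction.
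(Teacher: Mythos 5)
Your proof is correct and follows essentially the same route as the paper: efficiency from $\cov(X)=V(m)=J(m)^{-1}$ (Theorem \ref{th1}), and the maximum likelihood property by solving the score equation in the canonical parameter to get $\hat s=\psi(\bar X_n)$ and transferring back via the diffeomorphism $k'_\mu$ (the paper invokes functional invariance of the MLE for this last step). Your explicit caveat that the likelihood equation is solvable only on the event $\{\bar X_n\in M\}$ is a point the paper's one-line argument silently glosses over, and is worth keeping.
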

\begin{proof} 
 By Theorem  \ref{th1}  we have $\cov X= V(m)= J(m)^{-1}$, so the Cram\'er-Rao bound is attained by $X$.  Consequently,    the sample mean $ \bar X_n$ is an   efficient estimator of $m$. 
It follows from  (\ref{param2}) that the sample mean $ \bar X_n$ is a  maximum likelihood  estimator of $m$.  One can also first show by (\ref{param1}) that 
 the maximum likelihood estimator of $s$ is 
 $
 \hat{s} = k'^{-1}_{\mu}(X) = \psi(X)$
 and next use the functional invariance of the maximum likelihood estimator
\citep[Theorem 7.2.10]{casella}.
\end{proof}

\begin{rem}
For general exponential families $Q(m,T,\nu)$ parametrized by an open domain of means $M$, all these properties remain valid  
for $\hat m= \overline{T(X)}_n$ as an estimator of $m=\E T(X)$.
\end{rem}
%%%%%%%%%%%%%%%%%%%%%%%%%%%%%%%%%%%%%%%%%%%%%%%%%%%%%%%%%%%%%%%%%%%%%%%%%%%%%%%%%%

Consider an  exponential family   $Q( \theta A+B,\mu)$   parametrized by a segment of means $IA  +B\subset M$ with $A\not=0, B\in E$ and  $\theta\in I$, a segment in $\R$.
We  will now discuss estimators of the real parameter $\theta$
when we know that the mean $\E X= m\in I A  +B$.

 The segment $IA +B\subset M$ is of dimension one and has an empty interior. That's why the efficiency and maximum likelihood properties of  the estimator  $\hat m= \bar X_n$
 are not automatically inherited by natural estimators of the real parameter $\theta$. Determining  a   maximum likelihood estimator
 for $\theta$ seems impossible explicitly.  This is the "price to pay" for the parsimony of the segment  model parametrized by
$m\in  I A  +B$. On the other hand, the efficiency of estimators of $\theta$  may be studied thanks to Theorem \ref{thx} and its corollaries.\\

 Knowing  that
 \begin{equation}\label{solving}
   m=\theta  A+B
 \end{equation}
 for a value $\theta\in I$, we have many possibilities of  writing down a solution $\theta$ of   equation (\ref{solving}).
 If $A\not=0$ then the solution  $\theta$ is unique ($A\theta +B=A\theta' +B$ implies $\theta=\theta'$
when $A\not=0$.)
 For any $C$ such that $\langle A \,,\,C\rangle\not=0$ we  have
 \begin{eqnarray*}
\theta&=& \frac{\langle m-B\,,\, C\rangle}{\langle A \,,\,C\rangle }.
\end{eqnarray*}
We define an estimator $\hat \theta_C$ of the parameter $\theta$ by
\begin{eqnarray*}
\hat \theta_C&=& \frac{\langle \bar X_n-B\,,\, C\rangle}{\langle A \,,\,C\rangle }
\end{eqnarray*}

 All the  estimators $\hat \theta_C$ are unbiased and consistent.
The natural question is  whether  they are efficient.
The variance of $\hat \theta_C$ may be computed using the variance function $V(m)$ of the exponential family:
\begin{equation}\label{varianceTheta}
 {\rm Var}\, \hat \theta_C=\frac1{\langle A ,C\rangle^{2} } {\rm Var}\langle \bar X_n, C\rangle=\frac{\vc(C)^T V(\theta A+B)\vc (C)}{n\langle A ,C\rangle^{2} }.
\end{equation}
On the other hand, the Cram\'er-Rao bound is equal by  Theorem \ref{thx} to
\begin{equation}\label{CRaobound}
 \frac1{nJ(\theta)}= \frac1{n \vc(A)^T\, V(\theta A+B)^{-1}\,\vc(A)}.
\end{equation}

When the space $E$ is a squared matrix space $\R^{d\times d}$ and the matrix $A$ is invertible, we can take $C=A^{-1}$ and  consider the estimator
$$\hat \theta_{A^{-1}}=\frac{\langle \bar X_n-B, A^{-1}\rangle }{d}.$$
The   following theorem shows that for Gaussian and central Wishart exponential families and for 
linearly dependent $A$ and $B$  the estimator $\hat\theta_{A^{-1}}$ is efficient as an  estimator of the mean $m$
(with  $X_i$ replaced by $T(X_i)=-\frac12(X_i-u)(X_i-u)^T$ in  the Gaussian case). 
In conclusion, we obtain efficient estimators for Gaussian models parametrized by a
  covariance segment parameter  and for Wishart models parametrized by a
   scale segment parameter.
\begin{thm}
1.  Let  $I\subset \R^+$ be a non-empty segment.   Let $c\ge 0$,  $A\in \mathcal{S}_d^+$ and $B=cA$.  \\
(1a) Consider an  $n$-sample $(X_1,\ldots,X_n)$ from a  Gaussian family $Q(m,T,\nu)$  defined by (\ref{GaussMean}),
 where  $m=\theta A+B$, $\theta\in I$.
Then $$\hat \theta_{A^{-1}}=\frac{\langle \overline{T(X)}_n-B, A^{-1}\rangle }{d}$$
is an unbiased efficient estimator  of the parameter $\theta$.\\
%%%%%%%%%%%%%%%%%%%%%%%%%%%%%%%%%%%%%%%%%%%%%%%%%%%%%%%%%%%%%%%%%%%%%%%%%%
(1b) Consider an $n$-sample $(X_1,\ldots,X_n)$ from a  Wishart model $Q(m, \mu_p)$   defined by (\ref{WishartMean}),
 where  $m=\theta A+B$, $\theta\in I$.
  Then 
$$\hat \theta_{A^{-1}}=\frac{\langle \bar X_n-B, A^{-1}\rangle }{d}$$
is an unbiased efficient estimator  of the parameter $\theta$.\\
%%%%%%%%%%%%%%%%%%%%%%%%%%%%%%%%%%%%%%%%%%%%%%%%%%%%%%%%%%%%%%%%%%%%%%%%%%%
2. Let $c\ge 0$,  $C\in \mathcal{S}_d^+$ and $D=cC$.\\
(2a)  Consider an  $n$-sample $(X_1,\ldots,X_n)$ from  a Gaussian model $\{N(u, \theta  C  + D),  \theta \in I\}$
parametrized by a segment of covariances.
An unbiased efficient estimator of $\theta$ is given by 
$$\hat \theta=\frac{1}{d} \langle \frac{1}{n}\sum_{i=1}^n (X_i-u) (X_i-u)^T-D, C^{-1}\rangle. $$
$\;$\\
(2b)  Consider an  $n$-sample $(X_1,\ldots,X_n)$ from  a Wishart  model $\{\gamma(p,\theta  C  + D) ,  \theta \in I\}$
parametrized by a segment of  scale parameters.
Un unbiased efficient estimator of $\theta$ is given by 
$$\hat \theta=\frac{\langle \frac1p \bar X_n-D, C^{-1}\rangle }{d}.$$
\end{thm}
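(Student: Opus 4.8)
The plan is to treat parts (1a) and (1b) as the core statements and obtain (2a), (2b) from them by a change of ambient coordinates that leaves the parameter $\theta$, the family, and the estimator untouched. Indeed, for the Gaussian model $\{N(u,\theta C+D)\}$ the mean coordinate is $m=-\tfrac12\Sigma$, so writing $\Sigma=\theta C+D$ gives $m=\theta A+B$ with $A=-\tfrac12 C$, $B=-\tfrac12 D$, hence $A^{-1}=-2C^{-1}$; since $\overline{T(X)}_n=-\tfrac12\,\tfrac1n\sum_i(X_i-u)(X_i-u)^T$, a short substitution shows $\hat\theta_{A^{-1}}$ of (1a) equals the $\hat\theta$ of (2a). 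Likewise $Q(m,\mu_p)=\gamma(p;\tfrac1p m)$ gives $\sigma=\tfrac1p m$, so $\sigma=\theta C+D$ yields $A=pC$, $B=pD$, $A^{-1}=\tfrac1p C^{-1}$, and $\hat\theta_{A^{-1}}$ of (1b) becomes the $\hat\theta$ of (2b). In both reductions the colinearity $D=cC$ becomes $B=cA$, so it suffices to prove (1a) and (1b) under the hypothesis $B=cA$, which forces $m=\theta A+B=(\theta+c)A$, i.e. $m$ proportional to $A$.

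For unbiasedness I would note that, as $A$ is symmetric and invertible, $\langle A,A^{-1}\rangle=\tr(AA^{-1})=d\neq0$, so $\hat\theta_{A^{-1}}$ is exactly the estimator $\hat\theta_C$ with $C=A^{-1}$ introduced above. Since $\E\bar X_n=m=\theta A+B$ (respectively $\E\,\overline{T(X)}_n=m$ in the Gaussian case, by the Remark extending the estimation properties to general families $Q(m,T,\nu)$), the already-established unbiasedness of every $\hat\theta_C$ gives $\E\hat\theta_{A^{-1}}=\theta$.

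The heart of the argument is efficiency, which I would establish by comparing the variance (\ref{varianceTheta}) with the Cram\'er--Rao bound (\ref{CRaobound}). Both the Gaussian variance function (\ref{varianceGauss}) and the Wishart variance function (\ref{VarWish}) have the common form $V(m)=\kappa\,(m\otimes m)$, with $\kappa=2$ in the Gaussian case and $\kappa=1/p$ in the Wishart case, so that $V(m)^{-1}=\kappa^{-1}(m^{-1}\otimes m^{-1})$. Applying the Kronecker identity (\ref{kron}) in the form $(R\otimes R)\vc(G)=\vc(RGR)$ for symmetric $R$, I would reduce the two relevant scalars to traces:
\begin{equation*}
\vc(A^{-1})^T V(m)\,\vc(A^{-1})=\kappa\,\tr(A^{-1}mA^{-1}m),\qquad
\vc(A)^T V(m)^{-1}\vc(A)=\kappa^{-1}\tr(Am^{-1}Am^{-1}).
\end{equation*}
Hence $n\,\mathrm{Var}\,\hat\theta_{A^{-1}}=\kappa\,\tr(A^{-1}mA^{-1}m)/d^2$ while $n$ times the bound equals $\kappa/\tr(Am^{-1}Am^{-1})$, so efficiency is equivalent to the single identity $\tr(A^{-1}mA^{-1}m)\,\tr(Am^{-1}Am^{-1})=d^2$.

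The concluding step is to verify this identity, and it is precisely here that the colinearity hypothesis enters. Substituting $m=(\theta+c)A$ and $m^{-1}=(\theta+c)^{-1}A^{-1}$, each product telescopes to a scalar multiple of the identity matrix: $A^{-1}mA^{-1}m=(\theta+c)^2 I_d$ and $Am^{-1}Am^{-1}=(\theta+c)^{-2}I_d$, whose traces are $(\theta+c)^2 d$ and $(\theta+c)^{-2}d$, with product $d^2$. Thus $\mathrm{Var}\,\hat\theta_{A^{-1}}=1/(nJ(\theta))$, the Cram\'er--Rao bound is attained, and $\hat\theta_{A^{-1}}$ is efficient. I expect the only real obstacle to be conceptual rather than computational: the efficiency of the simple linear estimator hinges on $m$ and $A$ being proportional, which is exactly what colinearity of $A$ and $B$ provides; for general non-colinear $A,B$ the two traces no longer multiply to $d^2$ (by Cauchy--Schwarz the product exceeds $d^2$ unless $m\propto A$), so $\hat\theta_{A^{-1}}$ ceases to be efficient.
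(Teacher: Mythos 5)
Your proposal is correct and follows essentially the same route as the paper: compute $\mathrm{Var}\,\hat\theta_{A^{-1}}$ from (\ref{varianceTheta}) and the variance functions (\ref{varianceGauss}), (\ref{VarWish}), compare with the Cram\'er--Rao bound (\ref{CRaobound}), use the colinearity $B=cA$ to make both traces collapse to powers of $(\theta+c)$, and deduce part 2 from part 1 by the reparametrizations $A=-\tfrac12 C$, $B=-\tfrac12 D$ and $A=pC$, $B=pD$. Your packaging of efficiency as the single identity $\tr(A^{-1}mA^{-1}m)\,\tr(Am^{-1}Am^{-1})=d^2$ (with the Cauchy--Schwarz remark on why it fails for non-proportional $m$ and $A$) is a nice touch but not a different argument.
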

\begin{proof}
For the first part of the Theorem, we give the proof in the Wishart case. The proof in the Gaussian case is identical, with $p=\frac12$, cf. Remark \ref{GaussWishart}.
 By  formulas (\ref{varianceTheta}) and  (\ref{VarWish}) 
 $${\rm Var}\, \hat \theta_{A^{-1}}=\frac{1}{p d^2 n} \tr((A\theta+B)A^{-1}(A\theta+B)A^{-1})=\frac{(\theta+c)^2}{p d n}
 $$
 On the other  hand, by (\ref{CRaobound}) and  (\ref{VarWish}) 
 $$
 \frac1{nJ(\theta)}=\frac{1}{np   \tr(A(A\theta+B)^{-1}A(A\theta+B)^{-1})}=\frac{1}{np (\theta+c)^{-2 }d}.
 $$
 Thus ${\rm Var}\, \hat \theta=\frac1{nJ(\theta)}$ and the estimator $\hat\theta_{A^{-1}}$ is efficient.
 
 The second part of the Theorem follows by necessary reparametrizations. For (2a), using (\ref{GaussMean}), we write $\theta  C  + D= -2m$ with
 $m=\theta A + B$, where $A=-\frac{C}2$ and  $B=-\frac{D}2$. The part (2b) follows similarly from (\ref{WishartMean}).
\end{proof}
\begin{rem}
It is an open question whether $\hat\theta_{A^{-1}}$ may be efficient for independent $A$ and $B$.
 Let $n=1$. The equality ${\rm Var}\, \hat \theta = \frac{1}{J(\theta)}$ holds if and only if,
writing
$D_\theta=(A\theta+B)A^{-1}(A\theta+B)A^{-1},$
the  equality $\frac{1}{d^2} \tr(D_\theta)=\frac{1}{ \tr(D_\theta^{-1})}$ holds for all $\theta\in I$.
\end{rem}

 {\bf Acknowledgements.} The authors would like to thank   G\'erard Letac for his insightful suggestions.
 The second author   acknowledges  financial support(co-tutelle fellowship) from the French Ministry of Foreign Affairs
 and the Embassy of France in South Africa. The authors   are greatly indebted to l'Agence Nationale de la Recherche  for  the research grant ANR-09-Blan-0084-01.

 \bibliographystyle{mystyle}
\bibliography{references}

\end{document}